\pgfplotsset{compat=1.12}
\newtheorem{proposition}{Proposition}
\newtheorem{definition}{Definition}
\newtheorem{theorem}{Theorem}
\newtheorem*{theorem*}{Theorem}
\newtheorem{lemma}{Lemma}
\newtheorem{corollary}{Corollary}
\newtheorem*{main}{Theorem~\ref{thm:main}}
\theoremstyle{remark}
\newtheorem*{remark}{Remark}
\title[]{Enumeration of Generalized Dyck Paths Based on the Height of Down-Steps Modulo $k$}
\author[C. Heuberger \and S. J. Selkirk \and S. Wagner]{Clemens Heuberger \and Sarah J. Selkirk \and Stephan Wagner}
\address[Clemens Heuberger, Sarah J. Selkirk] {Institut
  f\"ur Mathematik, Alpen-Adria-Uni\-ver\-si\-t\"at Klagenfurt,
  Universit\"atsstra\ss e 65--67, 9020 Klagenfurt, Austria. }
\email{\href{mailto:clemens.heuberger@aau.at}{clemens.heuberger (at) aau.at}, \href{mailto:sarah.selkirk@aau.at}{sarah.selkirk (at) aau.at}}
\address[Stephan Wagner] {Department of Mathematics, Uppsala Universitet, Box 480, 751 06 Uppsala, Sweden.}
\email{\href{mailto:stephan.wagner@math.uu.se}{stephan.wagner (at) math.uu.se}}
\thanks{The research of C.~Heuberger and S.~J.~Selkirk is partially supported by the
  Austrian Science Fund (FWF): P~28466-N35, \emph{Analytic Combinatorics: Digits, Automata and Trees} and
  Austrian Science Fund (FWF): DOC 78. S.~Wagner is supported by the Knut and Alice Wallenberg Foundation.}
\keywords{Dyck path, generating function, Lagrange inversion, bijection, Fuss--Catalan number, Raney number}
\subjclass[2010]{05A15}
\newcommand{\drawlatticepath}[1]{
  \tikzmath{
      \x = 0;
      \y = 0;
      for \step in {#1}{
        {
          \draw[thin, ->] (\x, \y) -- (\x + 1, \y + \step);
        };
        \x = \x + 1;
        \y = \y + \step;
      };
    };
}
\newcommand{\up}{
  \begin{tikzpicture}[scale = 0.3]
    \draw[-, thick] (0, 0) to (1, 1); 
  \end{tikzpicture}
}
\newcommand{\down}{
  \begin{tikzpicture}[scale = 0.3]
    \draw[-, thick] (0, 1) to (1/2, 0); 
  \end{tikzpicture}
}
\newcommand{\xbar}{
    \mathsf{\overline{x}}
}
\begin{document}

\begin{abstract}
    For fixed non-negative integers $k$, $t$, and $n$, with $t < k$, a $k_t$-Dyck path of length $(k+1)n$ is a 
    lattice path that starts at $(0, 0)$, ends at $((k+1)n, 0)$, stays weakly above the line $y = -t$, and consists of steps from the step-set 
    $\{(1, 1), (1, -k)\}$.
    We enumerate the family of $k_t$-Dyck paths by considering the number of down-steps at a height of $i$ modulo $k$. Given a tuple 
    $(a_1, a_2, \ldots, a_k)$ we find an exact enumeration formula for the number of $k_t$-Dyck paths of length $(k+1)n$ with $a_i$ down-steps at 
    a height of $i$ modulo $k$, $1 \leq i \leq k$.  The proofs given are done via bijective means or with generating functions. 
\end{abstract}

\maketitle

\section{Introduction}

For a fixed positive integer $k$, a \emph{$k$-Dyck path} with length $(k+1)n$ is a lattice path consisting of steps $\{(1, 1), (1, -k)\}$ 
which starts at $(0, 0)$, ends at $((k+1)n, 0)$, and stays (weakly) above the $x$-axis. This is a natural extension of Dyck paths. The 
family of $k$-Dyck paths (or their reverse, with step-set $\{(1, k), (1, -1)\}$) has been the subject of a number of papers (see for example 
\cite{Cameron:2016:Returns, Hilton-Pedersen:1991:catalan-generalization, Josuat-Kim:2016:Dyck-tilings, muhle-kalli:2014:strip-decomposition-dyck-path, 
 Xin-Zhang:2019:sweep-maps}).
The family of $k$-Dyck paths is enumerated by the Fuss--Catalan numbers 
\begin{equation*}
    \frac{1}{kn+1}\binom{(k+1)n}{n},
\end{equation*}
where references to OEIS sequences~\cite{OEIS:2022} for specific values of $k$ can be found in the second row of Table~\ref{tab:sequences}. Bijections have been established between them and several other 
combinatorial objects, see for example~\cite{Heubach-Li-Mansour:2008:k-Catalan} for a list of such objects (referred to as $k$-Catalan structures). 

In~\cite{Gu-Prodinger-Wagner:2010:k-plane-trees} Gu, Prodinger and Wagner studied the family of $k$-plane trees: labelled plane trees where 
each vertex is labelled $i$ with 
$i \in \{1, 2, \ldots, k\}$ such that the sum of labels along any edge is at most $k+1$, with the root labelled $k$. Here they established a 
relationship between these trees and a modified version of a $k$-Dyck paths, which have since been named $k_t$-Dyck paths, whose definition is 
given below.  

\begin{definition}[$k_t$-Dyck paths]
    For fixed non-negative integers $k$, $t$, and $n$, with $0 \leq t < k$, a \emph{$k_t$-Dyck path} of length $(k+1)n$ is a lattice path 
    consisting of $n$ steps of type $(1, -k)$ and $kn$ steps of type $(1, 1)$, that starts at $(0, 0)$, ends at $((k+1)n, 0)$, and stays 
    weakly above the line $y = -t$. 
\end{definition}

The number of $k_t$-Dyck paths of length $(k+1)n$ was shown to be equal to the Raney numbers (see~\cite{Selkirk:2019:MSc}), 
\begin{equation}\label{eq:raney}
    \frac{t+1}{(k+1)n+t+1}\binom{(k+1)n+t+1}{n},
\end{equation}
which when specialized to $t = 0$ is equal to the sequence of Fuss--Catalan numbers. The Raney numbers enumerate tuples of $k$-ary 
trees \cite{Selkirk:2019:MSc}, $k$-plane trees \cite{Gu-Prodinger-Wagner:2010:k-plane-trees}, types of planar embeddings 
\cite{Beagley-Drube:2015:Raney}, binary matrices used in coding theory \cite{Asinowski-Hackl-Selkirk:2020:downstep-statistics}, 
and most recently threshold sequences and Motzkin-like paths~\cite{Rusu:2021:Raney-numbers}.

\begin{table}[ht]
    \begin{tabular}{| c | c | c | c | c | c | c | c |}
        \hline
        {$t$} \textbackslash {$k$} & $1$ & $2$ & $3$ & $4$ & $5$ & $6$ & $7$\\
        \hline
        $0$ & \href{https://oeis.org/A000108}{A000108} & \href{https://oeis.org/A001764}{A001764} & \href{https://oeis.org/A002293}{A002293} & \href{https://oeis.org/A002294}{A002294}
        & \href{https://oeis.org/A002295}{A002295} & \href{https://oeis.org/A002296}{A002296} & \href{https://oeis.org/A007556}{A007556}\\
        \hline
        $1$ & X & \href{https://oeis.org/A006013}{A006013} & \href{https://oeis.org/A069271}{A069271} & \href{https://oeis.org/A118969}{A118969} 
        & \href{https://oeis.org/A212071}{A212071} & \href{https://oeis.org/A233832}{A233832} & \href{https://oeis.org/A234461}{A234461} \\
        \hline
        $2$ & X & X & \href{https://oeis.org/A006632}{A006632} & \href{https://oeis.org/A118970}{A118970} & \href{https://oeis.org/A212072}{A212072} 
        & \href{https://oeis.org/A233833}{A233833} & \href{https://oeis.org/A234462}{A234462}\\
        \hline
    \end{tabular}
    \caption{References to OEIS sequences~\cite{OEIS:2022} of equation~\eqref{eq:raney} for fixed values of $k$ and $t$.}
    \label{tab:sequences}
\end{table}

The family of $k_t$-Dyck paths can also be seen as lattice paths on a rectangular grid from $(0, 0)$ to $(kn, n)$, consisting of steps $(1, 0)$
and $(0, 1)$, where the paths never go above the line $ky = x + t$. Here the $(1, 0)$ steps are up-steps and the $(0, 1)$ steps are down-steps 
in our model, respectively. The case where $t = 0$ has been studied in~\cite{Banderier-Wallner:rational-slope:2019, Duchon:2000:generalized-Dyck,
Heubach-Li-Mansour:2008:k-Catalan, Hilton-Pedersen:1991:catalan-generalization, Rogers:schroeder-triangle:1977},
for example. 

In this paper, we will study the enumeration of $k_t$-Dyck paths based on the number of down-steps $(1, -k)$ at given heights modulo $k$. In
the rectangular grid model, this is the number of $(0, 1)$-steps with $x$-coordinates congruent to a given number modulo $k$. 
We will make use of the following notation to avoid excessive repetition of the same lengthy phrase. 
\begin{definition}
    Let $k$, $t$, and $n$ be non-negative integers with $0 \leq t < k$. For each $1 \leq i \leq k$ let $a_{i}$ 
    be a non-negative integer, where $a_1 + \cdots + a_k = n$. Then we define $\mathcal{K}_{k, t}^{n}(a_1, \ldots a_{k})$ 
    to be the set of all $k_t$-Dyck paths of length $(k+1)n$ with $a_i$ down-steps $(1, -k)$ such that the endpoints of the 
    down-steps are at a height of $i$ modulo $k$ for $1\leq i\leq k$. 
\end{definition}
As usual, we denote the number of elements in $\mathcal{K}_{k, t}^{n}(a_1, \ldots a_{k})$ by 
$\big|\mathcal{K}_{k, t}^{n}(a_1, \ldots a_{k})\big|$.

\begin{figure}[h]
    \begin{tikzpicture}[scale=0.2]
        \draw[help lines] (0,-1) grid (12,7);
        \draw[thick, ->] (-0.25, 0) -- (12.5, 0);
        \draw[thick, ->] (0, -1.25) -- (0, 7.5);
        \drawlatticepath{1, 1, 1, -3, 1, 1, 1, 1, -3, 1, 1, -3}
    \end{tikzpicture}\quad
    \begin{tikzpicture}[scale=0.2]
        \draw[help lines] (0,-1) grid (12,7);
        \draw[thick, ->] (-0.25, 0) -- (12.5, 0);
        \draw[thick, ->] (0, -1.25) -- (0, 7.5);
        \drawlatticepath{1, 1, 1, 1, -3, 1, 1, -3, 1, 1, 1, -3}
    \end{tikzpicture}\quad
    \begin{tikzpicture}[scale=0.2]
        \draw[help lines] (0,-1) grid (12,7);
        \draw[thick, ->] (-0.25, 0) -- (12.5, 0);
        \draw[thick, ->] (0, -1.25) -- (0, 7.5);
        \drawlatticepath{1, 1, 1, 1, -3, 1, 1, 1, 1, 1, -3, -3}
    \end{tikzpicture}\quad
    \begin{tikzpicture}[scale=0.2]
        \draw[help lines] (0,-1) grid (12,7);
        \draw[thick, ->] (-0.25, 0) -- (12.5, 0);
        \draw[thick, ->] (0, -1.25) -- (0, 7.5);
        \drawlatticepath{1, 1, 1, 1, 1, 1, 1, -3, 1, 1, -3, -3}
    \end{tikzpicture}\quad
    \begin{tikzpicture}[scale=0.2]
        \draw[help lines] (0,-1) grid (12,7);
        \draw[thick, ->] (-0.25, 0) -- (12.5, 0);
        \draw[thick, ->] (0, -1.25) -- (0, 7.5);
        \drawlatticepath{1, 1, 1, 1, 1, 1, -3, 1, -3, 1, 1, -3}
    \end{tikzpicture}
    \caption{The five $3_1$-Dyck paths in $\mathcal{K}_{3, 1}^3(1, 0, 2)$ for $n = 3$, $k = 3$, and $t = 1$.}
\end{figure}
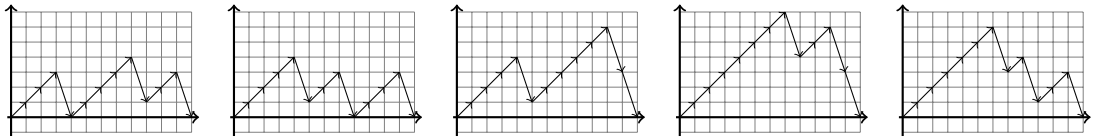
Okoth and Wagner have studied a similar statistic in~\cite{Okoth-Wagner:tbc:k-plane}, where they determined the enumeration of $k$-plane trees based on the number 
of vertices labelled $i$, as well as a similar enumeration of non-crossing trees. However, the bijection between $k$-plane trees and $k_t$-Dyck 
paths does not preserve this statistic, and the counting formula is different for the $k$-plane trees statistic, and thus 
one cannot derive a formula for $|\mathcal{K}_{k, t}^n(a_1, \ldots, a_k)|$ from this relationship.

In~\cite{Burstein:2020:peaks-mod-k}, Burstein considers the distribution of peak heights modulo $k$ and double descents in $k$-Dyck paths, 
which is closely related to this work since $k_0$-Dyck paths are $k$-Dyck paths, and a peak is an up-step followed by a down-step, so the height of peaks has 
a relationship with the height of down-steps. 

In this paper, we will make use of bijective and generating function methods to show the following main result along with related results. 

\begin{main}
    Let $k$, $n$ and $t$ be non-negative integers with $0 \leq t \leq k-1$. For each $1 \leq i \leq k$ let $a_{i}$ 
    be a non-negative integer, where $a_1 + \cdots + a_k = n$. Then the number 
    of $k_t$-Dyck paths of length $(k+1)n$ with $a_i$ down-steps at a height of $i$ modulo $k$, $1 \leq i \leq k$, is
    \begin{align*}
        \big|\mathcal{K}_{k, t}^{n}(a_1, \ldots a_{k})\big| = &\frac{a_{k-t}+\cdots+a_{k}}{n(n+1)}
        \prod_{i = k-t}^{k}\binom{n+a_i}{a_i}\prod_{i=1}^{k-t-1}\binom{n+a_i-1}{a_i}.
    \end{align*} 
\end{main}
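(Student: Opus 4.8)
The formula has the shape of a product of binomial coefficients times a simple rational prefactor, which strongly suggests a Lagrange inversion / kernel-method approach on a multivariate generating function, combined with a bijective reduction. My plan is to proceed in two stages: first handle the case $t=0$ (ordinary $k$-Dyck paths, weakly above the $x$-axis) via a functional equation and multivariate Lagrange inversion, then lift to general $t$ by a decomposition argument that peels off the portion of the path below the $x$-axis.

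\medskip

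\textbf{Stage 1: the generating function for $t=0$.} Introduce commuting indeterminates $x_1,\dots,x_k$ marking down-steps whose endpoint has height $\equiv i \pmod k$, and let $F_j(x_1,\dots,x_k)$ be the generating function for $k_0$-Dyck paths that start at height $j$ (for $0\le j\le k-1$) and return to height $j$ while staying weakly above height $0$ — equivalently, "primitive" or "arch-decomposed" pieces. A first-return / last-passage decomposition of a path at height $j$ gives a system: a path at height $j$ either is empty, or consists of an up-step to height $j+1$, an excursion above, a down-step, and then a remaining path — bookkeeping the residue class of each down-step's endpoint carefully. One obtains a system of $k$ polynomial equations in $F_0,\dots,F_{k-1}$ of the form $F_j = 1 + (\text{up}) F_{j+1}\cdots$ with the down-step from height $j+1$ landing at height $j+1-k \equiv j+1 \pmod k$, so it is marked by $x_{(j+1)\bmod k}$ (reading residues in $\{1,\dots,k\}$). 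The desired quantity is the coefficient of $\prod x_i^{a_i}$ in $F_0$. The main technical device is a suitable multivariate Lagrange inversion (Good's formula, or the Bousquet-Mélou–Schaeffer version) applied to this system; the product structure $\prod_{i=1}^{k}\binom{n+a_i-1}{a_i}$ with $n=\sum a_i$ is exactly what such inversions typically yield, and the rational prefactor $\tfrac{1}{n(n+1)}$ or rather (for $t=0$) an appropriate specialization will come out of the Lagrange kernel.

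\medskip

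\textbf{Stage 2: incorporating $t$.} For general $t$, a $k_t$-Dyck path stays weakly above $y=-t$. I would decompose such a path by tracking its successive minima, or equivalently by cutting at the down-steps that reach new record lows $-1,-2,\dots,-t$ (there can be at most $t$ such, and since each down-step drops by $k\ge t+1$, the path can only dip to each of these levels in a controlled way). Reflecting / shifting the path, the portion below the $x$-axis contributes the extra factors that change $t+1$ of the binomials from $\binom{n+a_i-1}{a_i}$ to $\binom{n+a_i}{a_i}$ — namely the indices $i=k-t,\dots,k$, which are precisely the residues one crosses when descending from $0$ to $-t$ — and changes the prefactor to $\tfrac{a_{k-t}+\cdots+a_k}{n(n+1)}$; the numerator $a_{k-t}+\cdots+a_k$ counts the down-steps whose endpoint height lies in the "boundary" residue window $\{k-t,\dots,k\}\bmod k$, i.e. the down-steps eligible to be the one touching level $-t$ at the end of the path (or the cut points). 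Concretely I expect to write $\mathcal K_{k,t}^n$ as a concatenation of a $k_{t-1}$-type piece ending at level $-1$-or-above and a piece living in a shifted strip, giving a recursion in $t$ that is then solved, its solution matching the claimed formula by induction on $t$ with Stage 1 as the base case.

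\medskip

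\textbf{Expected main obstacle.} The genuinely delicate point is the bookkeeping of residue classes under the down-steps of size $k$: a down-step from height $h$ lands at $h-k \equiv h \pmod k$, so its endpoint residue equals its \emph{start} residue, but the "height modulo $k$" statistic in the problem is about the \emph{endpoint}, and when the path dips below $0$ the relevant residues shift. Getting the indices $i=k-t,\dots,k$ versus $1,\dots,k-t-1$ to fall out correctly — and in particular verifying that the multivariate Lagrange inversion's kernel produces exactly $\tfrac{1}{n(n+1)}(a_{k-t}+\cdots+a_k)$ and not some other symmetric combination — will require careful tracking of which variable multiplies which $F_j$ in the functional system, and a clean choice of the Lagrange substitution. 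I would double-check the final formula against the small cases in the figure ($k=3$, $t=1$, $(a_1,a_2,a_3)=(1,0,2)$, which should give $5$) and against the Raney number total \eqref{eq:raney} obtained by summing over all compositions $(a_1,\dots,a_k)$ of $n$.
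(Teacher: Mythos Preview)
Your plan differs from the paper's proof in its overall architecture, and Stage~1 as written contains a real gap. The first-return decomposition you describe (``up-step to $j+1$, excursion above, down-step, remaining path'') is the Dyck-path decomposition for unit down-steps; it does not carry over to down-steps of size $k>1$, since a down-step from height $j+1$ lands at $j+1-k\ne j$ and is therefore not a return to level $j$. One can of course write \emph{some} system for the $F_j$, but it will not be the one-line recursion you indicate, and feeding it into Good's multivariate Lagrange formula is substantially more work than the product form of the answer suggests. The obstacle you flag at the end --- getting the residue bookkeeping and the prefactor to come out right --- is genuine, and your sketch does not yet contain the idea that resolves it.

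The paper's route hinges on an idea absent from your plan: it takes $t=k-1$, not $t=0$, as the base case, and first proves \emph{bijectively} that $|\mathcal K_{k,k-1}^n(a_1,\dots,a_k)|$ is symmetric in the $a_i$ (by sliding down-steps past neighbouring up-steps). This symmetry is precisely what tames the residue bookkeeping: decomposing a $k_{k-1}$-Dyck path at its weak right-to-left minima yields vertically shifted $k_{k-1}$-Dyck paths whose $x_i$-labels are cyclically permuted, but by symmetry their generating function is unchanged. One then obtains a \emph{single} equation $F_{k-1}=\prod_{i=1}^{k}(1-zx_iF_{k-1})^{-1}$, to which ordinary univariate Lagrange inversion applies directly, giving the product of binomials. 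For general $t$ the same right-to-left-minima decomposition expresses $F_t$ directly as $\prod_{i=k-t}^{k}(1-zx_iF_{k-1})^{-1}$, and the $[z^n]g(y(z))$ form of Lagrange inversion finishes the computation --- no induction on $t$, no multivariate Lagrange, and the index range $k-t,\dots,k$ together with the prefactor $(a_{k-t}+\cdots+a_k)/(n(n+1))$ fall out of the derivative $g'$ without any guesswork.
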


Thereafter we will establish relationships between some special cases of the set 
$\mathcal{K}_{k, t}^{n}(a_1, \ldots a_{k})$ and other combinatorial objects such 
as peaks and valleys in Dyck paths, Dyck paths themselves, 
and the sum of the indices of the down-steps in all valleys in Dyck paths.

\section{Enumeration of $k_t$-Dyck paths according to the heights of down-steps modulo $k$}

We begin by enumerating a special case of $k_t$-Dyck path based on height of down-steps modulo $k$, when $t = k-1$, and will later use 
this to enumerate the general case. The proof of the special case is done bijectively, and the remaining cases are proved via generating 
function methods. 

\begin{lemma}\label{lem:wag-comb-interp}
    When $t = k-1$, $\big|\mathcal{K}_{k, t}^{n}(a_1, \ldots a_{k})\big|$ is symmetric in $a_1, \ldots, a_{k}$. That is,
    if $\overline{(a_1, \ldots, a_{k})}$ is a permutation of $(a_1, \ldots, a_{k})$, then
    \begin{equation*}
        \big|\mathcal{K}_{k, k-1}^{n}(a_1, \ldots, a_{k})\big| = \big|\mathcal{K}_{k, k-1}^{n}\overline{(a_1, \ldots, a_{k})}\big|.
    \end{equation*}
\end{lemma}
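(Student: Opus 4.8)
The natural approach is to produce, for each transposition of two adjacent coordinates $a_i$ and $a_{i+1}$ (indices taken in $\{1,\dots,k\}$, and including the "wrap-around" pair $a_k$ and $a_1$ via a suitable reading convention), an explicit bijection between $\mathcal{K}_{k,k-1}^n(a_1,\dots,a_i,a_{i+1},\dots,a_k)$ and $\mathcal{K}_{k,k-1}^n(a_1,\dots,a_{i+1},a_i,\dots,a_k)$; since adjacent transpositions generate the symmetric group, this yields the full symmetry. The key structural fact to exploit is that $t=k-1$ is the \emph{maximal} allowed value of $t$, so a $k_{k-1}$-Dyck path stays weakly above $y=-(k-1)$; equivalently, shifting the whole picture up by $k-1$, it is a path from $(0,k-1)$ to $((k+1)n,k-1)$ with steps $(1,1)$ and $(1,-k)$ staying weakly above the $x$-axis. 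In this shifted model the heights modulo $k$ are shifted by $k-1\equiv -1$, but more importantly every down-step now lands at a height $\geq 0$, and the residues $0,1,\dots,k-1$ of the landing heights are exactly the classes we are tracking (up to relabelling). The freedom coming from $t=k-1$ is precisely what makes all $k$ residue classes play symmetric roles, which fails for smaller $t$.

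Concretely, I would encode a $k_{k-1}$-Dyck path by recording, between consecutive down-steps, the block of up-steps and the residue class of the height just before the down-step. Since each down-step drops the height by $k$ and each up-step raises it by $1$, the sequence of landing heights is determined by the multiset of up-step-block lengths together with the ordering; tracking heights modulo $k$ turns the down-steps into a word over the alphabet $\{1,\dots,k\}$. The idea is then to set up a transfer-matrix / cycle-lemma style argument, or a direct local surgery: locate a minimal subpath that "realizes" a down-step at height $\equiv i$ immediately followed (in the relevant cyclic sense) by one at height $\equiv i+1$, and swap the two up-step blocks that produce these two landings. One must check that such a swap keeps the path weakly above $y=-(k-1)$ — this is where $t=k-1$ is used, since the constraint is weakest possible — and that it exactly exchanges one unit of $a_i$ for one unit of $a_{i+1}$ while fixing all other $a_j$. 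Iterating over all such configurations gives the bijection for the adjacent transposition $(i\ i+1)$.

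The main obstacle is the "wrap-around" transposition, i.e. showing symmetry between $a_k$ and $a_1$ (equivalently, that the cyclic relabelling $i\mapsto i+1 \bmod k$ is also realized), because a down-step landing at height $\equiv k$ versus $\equiv 1$ differs by an actual height change of roughly $k-1$, not $1$, so the naive local swap changes the floor constraint the most. I expect this to be handled either by a global reflection/reversal symmetry of $k_{k-1}$-paths (the reversal of a $k_{k-1}$-Dyck path, read right-to-left with steps negated, is again a $k_{k-1}$-Dyck path, and this reversal should cyclically permute the down-step residues), or by composing the adjacent swaps already established with such a reversal to manufacture the missing generator. A secondary technical point is bookkeeping the residue classes carefully under the $+(k-1)$ shift and under path reversal, to confirm that the induced permutation on $(a_1,\dots,a_k)$ is the intended one; this is routine but must be done explicitly. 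Once the generating set of permutations is shown to act, the symmetry of $\big|\mathcal{K}_{k,k-1}^n(a_1,\dots,a_k)\big|$ in all $k$ arguments follows immediately.
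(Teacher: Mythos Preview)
Your high-level strategy---produce bijections realizing transpositions and exploit that $t=k-1$ gives maximal slack---matches the paper's, but two parts of your plan go astray. First, the wrap-around transposition $(k\ 1)$ is a red herring: the adjacent transpositions $(1\ 2),(2\ 3),\dots,(k-1\ k)$ already generate $S_k$, so you never need it. Second, your proposed fix for it via path reversal does not work as stated: reversing a path with step set $\{(1,1),(1,-k)\}$ yields a path with step set $\{(1,-1),(1,k)\}$, which is a different family, so reversal is not an involution on $k_{k-1}$-Dyck paths and does not obviously permute the residue classes of down-steps in the way you want.

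More substantively, your description of the bijection (``locate a minimal subpath \dots\ and swap the two up-step blocks'') is vague and harder than necessary. The paper's mechanism is global and much simpler: since the height of a down-step modulo $k$ depends only on the number of up-steps preceding it, one converts \emph{every} down-step at height $i\bmod k$ into one at height $j\bmod k$ by sliding it $i-j$ up-steps to the left (if $j<i$) or $j-i$ up-steps to the right (if $j>i$); performing the reverse shift simultaneously on the height-$j$ down-steps realizes the transposition $(i\ j)$ directly, for \emph{arbitrary} $i,j$. The only thing to check is that these shifts keep the path weakly above $y=-(k-1)$, and that is exactly where $t=k-1$ enters: a maximal run of down-steps at height $i\bmod k$ ends at height $\ge i-k$, so a left shift by at most $i-1$ up-steps (all that is ever needed, since $1\le j\le i-1$) keeps the endpoint $\ge 1-k=-(k-1)$; the right-shift bound is symmetric. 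No pairing of individual down-steps, no cyclic reading, and no reversal are required.
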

\begin{proof}
    When $k=1$ it is clear that the lemma holds, since any permutation of $a_1$ is trivial. Therefore we assume that $k \geq 2$ 
    for the remainder of this proof. 

    Firstly, note that because down-steps $(1, -k)$ do not contribute to height modulo $k$, the height of a down-step modulo $k$ 
    is entirely dependent on the number of up-steps $(1, 1)$ that occur before it in the path. The central concept of this proof 
    is that we can swap two (maximal) sets of down-steps at different heights modulo $k$. A simple case of this is that one can shift 
    down-steps at a height of $k$ modulo $k$ to a height of $k-1$ modulo $k$ (and vice versa) by shifting 
    the down-steps at a height of $k$ modulo $k$ one up-step to the left (vice versa: right) in the path. This is allowed because 
    by definition ($t < k$), a down-step (or a consecutive sequence thereof) at a height of $k$ modulo $k$ must end at a height of 
    $y = 0$ or above and cannot occur at the beginning of a path, and thus the shifted down-step(s) will end at a height of $-1$ 
    or above, and thus be (weakly) above $y = -t = -k+1$. Similarly, a down-step at a height of $k-1$ modulo $k$ will 
    not occur directly at the end of a path, and thus it will be followed by at least one up-step which makes the shift one up-step to the right 
    permissible.   
    Note that 
    consecutive sequences of down-steps that are shifted are all shifted one up-step to the left (vice versa: right) and thus remain 
    consecutive. Exchanging $a_{k-1}$ and $a_k$ (simultaneously) using these shifts as demonstrated in Figure~\ref{fig:shift-for-permutation} 
    is a bijection, and thus  
    $\big|\mathcal{K}_{k, k-1}^{n}(a_1, \ldots, a_{k-2}, a_{k-1}, a_{k})\big| = \big|\mathcal{K}_{k, k-1}^{n}(a_1, \ldots, a_{k-2}, a_{k}, a_{k-1})\big|$.
    \begin{figure}[ht]
        \begin{tikzpicture}[scale = 0.35]
            \draw[help lines] (0,-2) grid (12,4);
            \draw[thick, ->] (-0.25, 0) -- (12.5, 0);
            \draw[thick, ->] (0, -2.25) -- (0, 4.5);
            \drawlatticepath{1, -3, 1, 1, 1, 1, 1, -3, 1, 1, -3, 1}
            \draw[->, thick, red] (7, 3) to (8, 0);
            \draw[->, thick, blue] (10, 2) to (11, -1);
        \end{tikzpicture}\raisebox{2.5em}{$\rightarrow$}
        \begin{tikzpicture}[scale = 0.35]
            \draw[help lines] (0,-2) grid (12,4);
            \draw[thick, ->] (-0.25, 0) -- (12.5, 0);
            \draw[thick, ->] (0, -2.25) -- (0, 4.5);
            \drawlatticepath{1, -3, 1, 1, 1, 1, -3, 1, 1, 1, 1, -3}
            \draw[->, thick, red] (6, 2) to (7, -1);
            \draw[->, thick, dotted, red] (7, 3) to (8, 0);
            \draw[->, thick, dotted] (6, 2) to (7, 3);
            \draw[->, thick, dotted, blue] (10, 2) to (11, -1);
            \draw[->, thick, dotted] (11, -1) to (12, 0);
            \draw[->, thick, blue] (11, 3) to (12, 0);
        \end{tikzpicture}\raisebox{2.5em}{$\rightarrow$}
        \begin{tikzpicture}[scale = 0.35]
            \draw[help lines] (0,-2) grid (12,4);
            \draw[thick, ->] (-0.25, 0) -- (12.5, 0);
            \draw[thick, ->] (0, -2.25) -- (0, 4.5);
            \drawlatticepath{1, -3, 1, 1, 1, 1, -3, 1, 1, 1, 1, -3}
            \draw[->, thick, red] (6, 2) to (7, -1);
            \draw[->, thick, blue] (11, 3) to (12, 0);
        \end{tikzpicture}
        \caption{A $3_2$-Dyck path of length $12$. The down-steps $(1, -3)$ at a height of $3$ modulo $3$ (marked in red) are 
        `exchanged' with those at a height of $2$ modulo $3$ (marked in blue).}
        \label{fig:shift-for-permutation}
    \end{figure}
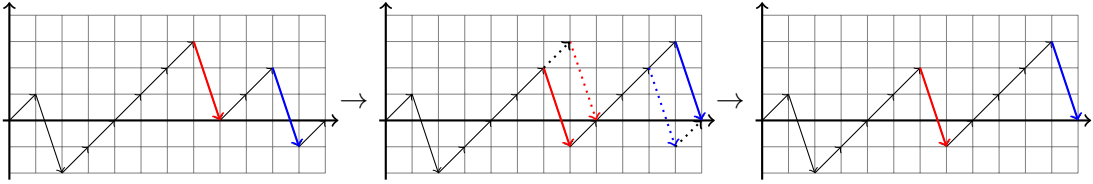
    
    This shifting can be generalised: 
    \begin{itemize}
        \item Let $1 \leq j < i \leq k$. To shift down-steps from a height of $i$ modulo $k$ to $j$ modulo $k$, down-steps should 
        be shifted $i-j$ up-steps to the left.
        \item Let $1 \leq i < j \leq k$. To shift down-steps from a height of $i$ modulo $k$ to $j$ modulo $k$, down-steps should 
        be shifted $j-i$ up-steps to the right.
    \end{itemize}
    The lowest possible endpoint for a down-step (or a consecutive sequence thereof) at a height of $i$ modulo $k$ is $y = -k+i$, 
    therefore exchanges of $a_i$ to any 
    other $a_j$ are possible as long as a shift to the left of $i$ (or more) positions does not take place (otherwise the path would not stay 
    weakly above $y = -(k-1)$). Indeed, this is prevented since any left shift is $i-j$ up-steps and from the condition on left 
    shifts, $0 < i-j \leq i-1$. Lastly, there will always be (at least) $i-j$ up-steps to the left of a down-step (or a consecutive 
    sequence thereof) at a height of $i$ modulo $k$ since $i - j \leq i$. 
    Similarly, a shift to the right is restricted by the number of up-steps to the right of a down-step. 
    Since the total number of up-steps is $kn$ and thus congruent to $k$ modulo $k$, there are a minimum of $k-i$ up-steps after
    a down-step (or a consecutive sequence thereof) at a height of $i$ modulo $k$, and thus exchanges of $a_i$ to any other $a_j$ 
    are possible as long as a shift 
    to the right of $k-i+1$ (or more) places does not take place, and the condition for steps to the right ensures that $j-i \leq k-i$.
    
    Since left and right shifts have been shown to be legal, any pair $a_i$ and $a_j$ can be simultaneously exchanged uniquely 
    (which steps shift left and right is clearly defined), meaning 
    \begin{equation*}
    \big|\mathcal{K}_{k, k-1}^{n}(a_1, \ldots, a_{i}, \ldots, a_j, \ldots, a_{k})\big| = \big|\mathcal{K}_{k, k-1}^{n}(a_1, \ldots, a_{j}, \ldots, a_i, \ldots, a_{k})\big|.
    \end{equation*}
    Simultaneous exchange can be thought of as composition of the two shifts, which are independent of the positioning of other 
    down-steps, keeping track of the original heights of the steps to avoid shifting one height of steps twice.
    Since any permutation of $(a_1, \ldots, a_k)$ can be obtained via a finite number of exchanges of two elements, we have proved 
    the result. 
\end{proof}
\begin{remark}
    The symmetry described in Lemma~\ref{lem:wag-comb-interp} and the proof thereof can be applied to the more general case within 
    $(a_{k-t}, \ldots, a_{k})$ and $(a_1, \ldots, a_{k-t-1})$, of which $t = k-1$ is a special case.
\end{remark}

\begin{proposition}\label{prop:wag-gen-func}
    For $k$ a positive integer,
    \begin{equation}\label{prop:eqn}
        \big|\mathcal{K}_{k, k-1}^{n}(a_1, \ldots a_{k})\big| = \frac{1}{n+1}\prod_{i = 1}^{k}\binom{n+a_i}{a_i}.
    \end{equation}
\end{proposition}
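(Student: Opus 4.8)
The plan is to move to the rectangular‑grid description of $k_{k-1}$‑Dyck paths recalled in the introduction, reduce the problem to counting nonnegative integer arrays with a single Catalan‑type constraint, and then finish with a generating function and Lagrange inversion. Concretely, a $k_{k-1}$‑Dyck path of length $(k+1)n$ becomes a lattice path from $(0,0)$ to $(kn,n)$ with unit right‑ and up‑steps that stays weakly below $ky=x+k-1$; such a path is determined by the numbers $b_1,\dots,b_{kn}$ of up‑steps in the columns $x=1,\dots,kn$ (column $0$ must be empty), and the position constraint is $b_x\ge 0$, $\sum_x b_x=n$, and $\sum_{x'\le m}b_{x'}\le\lceil m/k\rceil$ for all $m$. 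Grouping the columns into $n$ consecutive blocks of length $k$ and writing $b_{j,i}=b_{(j-1)k+i}$ (so that, after matching residue $0$ with $i=k$, one has $a_i=\sum_{j=1}^{n}b_{j,i}$), the crucial observation is that since $\lceil m/k\rceil$ is constant on each block the only binding inequalities are those at block boundaries; hence the whole constraint is equivalent to the requirement that the block sums $c_j:=\sum_{i=1}^{k}b_{j,i}$ satisfy $c_j\ge 0$, $\sum_{j=1}^n c_j=n$ and $c_1+\cdots+c_m\le m$ for $1\le m\le n$, i.e.\ form a \emph{Catalan sequence}. So $\big|\mathcal{K}_{k,k-1}^{n}(a_1,\dots,a_k)\big|$ equals the number of arrays $(b_{j,i})_{1\le j\le n,\,1\le i\le k}$ of nonnegative integers with column sums $a_1,\dots,a_k$ whose block sums form a Catalan sequence.

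To finish, introduce the generating function $T(x_1,\dots,x_k)=\sum_{(b_{j,i})}\prod_i x_i^{a_i}$ over all such arrays; this is a well‑defined formal power series because $\prod_i x_i^{a_i}$ is homogeneous of degree $n=\sum_i a_i$, which is the length of the associated Catalan sequence. Encoding Catalan sequences of length $n$ by plane trees with $n+1$ vertices (equivalently Łukasiewicz paths), a block of sum $c$ corresponding to a vertex of out‑degree $c$ carrying weight $h_c(x_1,\dots,x_k)=\sum_{b_1+\cdots+b_k=c}\prod_i x_i^{b_i}$, the standard functional equation for weighted plane trees gives $T=\Phi(T)$ with $\Phi(u)=\sum_{c\ge0}h_c u^{c}=\prod_{i=1}^{k}(1-x_iu)^{-1}$, that is $T=\prod_{i=1}^{k}(1-x_iT)^{-1}$. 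Marking the number of vertices by an auxiliary variable $z$, so that $T=z\,\Phi(T)$ and the monomial $\prod_i x_i^{a_i}$ with $n=\sum_i a_i$ appears in the coefficient of $z^{n+1}$, Lagrange inversion yields
\[
[z^{n+1}]\,T=\frac{1}{n+1}[u^{n}]\,\Phi(u)^{\,n+1}=\frac{1}{n+1}[u^{n}]\prod_{i=1}^{k}(1-x_iu)^{-(n+1)}=\frac{1}{n+1}\sum_{b_1+\cdots+b_k=n}\ \prod_{i=1}^{k}\binom{n+b_i}{b_i}x_i^{b_i},
\]
and reading off the coefficient of $\prod_i x_i^{a_i}$ gives exactly $\frac{1}{n+1}\prod_{i=1}^{k}\binom{n+a_i}{a_i}$.

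The step I expect to be the main obstacle is the first one: checking that the map to arrays is genuinely a bijection and, above all, that the grid constraint collapses to the Catalan condition on the block sums, with the residue‑$0$/residue‑$k$ bookkeeping handled correctly; everything afterwards is routine symmetric‑function and Lagrange‑inversion manipulation. I would also remark that the array count admits a purely bijective finish: appending an empty $(n+1)$‑st block and rotating cyclically, the cycle lemma shows that the arrays with Catalan block sums are in $(n+1)$‑to‑one correspondence with \emph{arbitrary} distributions of $a_i$ balls of colour $i$ into $n+1$ boxes, of which there are $\prod_{i=1}^{k}\binom{n+a_i}{a_i}$; this alternative simultaneously re‑proves the symmetry asserted in Lemma~\ref{lem:wag-comb-interp}.
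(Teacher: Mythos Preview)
Your argument is correct and reaches the same functional equation $T=\prod_{i=1}^{k}(1-x_iT)^{-1}$ as the paper, but by a genuinely different route. The paper decomposes a $k_{k-1}$-Dyck path at the first returns to the $x$-axis following its weak right-to-left minima, obtaining a sequence of shorter, vertically shifted $k_{k-1}$-Dyck paths; crucially, it must then invoke Lemma~\ref{lem:wag-comb-interp} to argue that each shifted piece carries the same generating function as an unshifted one. Your grid reformulation sidesteps this: once the boundary $ky\le x+k-1$ is seen to bind only at the block endpoints $m=jk$, the residues $i$ become a free $k$-colouring of each block with no interaction between colours, so the symmetry in $a_1,\dots,a_k$ is immediate rather than a prerequisite, and the functional equation follows from the standard weighted plane-tree recursion. (The off-by-one --- a length-$n$ Catalan sequence versus a tree on $n+1$ vertices --- is harmless, since the extra leaf carries weight $h_0=1$.) What the paper's decomposition buys is that it is phrased directly on the paths and extends verbatim to arbitrary $t$, giving~\eqref{eq:general-func-eq} in the proof of Theorem~\ref{thm:main}; adapting your block argument to general $t$ would require realigning the blocks. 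Your cycle-lemma remark is a nice bonus: since $\gcd(n,n+1)=1$ rules out nontrivial periodicity among $(n{+}1)$-tuples of nonnegative integers summing to $n$, the $(n{+}1)$-to-$1$ correspondence is clean and yields a fully bijective proof of both the formula and Lemma~\ref{lem:wag-comb-interp}.
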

\begin{proof}
    Consider any $k_{k-1}$-Dyck path, and consider the weak right-to-left minima of the path (in this paper, a weak right-to-left minimum is an end-point of a down-step
    $(1, -k)$ that is no higher than any points that are further right). See 
    Figure~\ref{fig:decomp} for a depiction of this. 
    Let $M$ be the set of these points. For each point in $M$, we consider the first return to the $x$-axis that comes after 
    it (i.e., the first point to the 
    right of it that lies on the $x$-axis; for a weak right-to-left minimum on the $x$-axis, that is just the point itself). 
    Let $R$ be the set of these first returns.

    We cut the $k_{k-1}$-Dyck path at the points in $R$ to divide it into a number of segments (remark: for $k=1$, this is 
    the standard decomposition of Dyck paths into a sequence of arches). Each of them contains exactly one point of 
    $M$ either inside or at its right end (we do not count the left end even if it also belongs to $M$). This is because given 
    two consecutive right-to-left minima at heights $i$ and $h$ where $1-k \leq i \leq 0$ and $i \leq h \leq 0$, there must be at 
    least $k$ up-steps between the two minima for the down-step ending at height $h$ to occur. Thus either one of the $k$ 
    up-steps has an endpoint on the $x$-axis or both right-to-left minima are on the $x$-axis.
    If a segment 
    contains a point in $M$ whose $y$-coordinate is $j-k$, then its shape is
    \begin{itemize}
        \item $j$ steps up, followed by a (vertically shifted) $k_{k-1}$-Dyck path, followed by a down-step $(1, -k)$ and $k-j$ steps up.
    \end{itemize}
    
    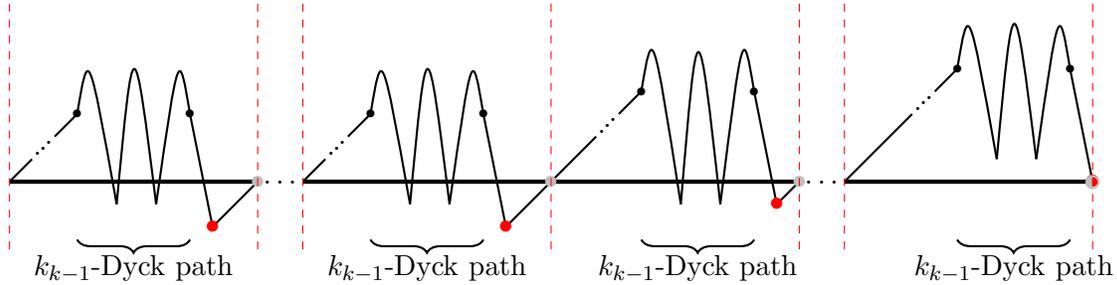
\begin{figure}[h]
        \begin{tikzpicture}[scale = 0.3]
            \draw [-, ultra thick] (-13, 0) to (-2, 0);

            \node at (-0.9, 0) {\ldots};
            
            \draw [-, ultra thick] (0, 0) to (22, 0);

            \node at (23.1, 0) {\ldots};
            
            \draw [-, ultra thick] (24, 0) to (35, 0);

            \draw [-, thick] (0-13, 0) to (1-13, 1);
            \node at (1.25-13, 1.25) {.};
            \node at (1.5-13, 1.5) {.};
            \node at (1.75-13, 1.75) {.};
            \draw [-, thick] (2-13, 2) to (3-13, 3);
        
            \draw [-, thick] (3-13, 3) .. controls (3.5-13, 6) .. (4.75-13, -1);
            \draw [-, thick] (4.75-13, -1) .. controls (5.5-13, 7) .. (6.5-13, -1);
            \draw [-, thick] (6.5-13, -1) .. controls (7.5-13, 6) .. (8-13, 3);
        
            \draw [-, thick] (8-13, 3) to (9-13, -2);
            \draw [-, thick] (9-13, -2) to (11-13, 0);
            \node[red] at (-4, -2) {$\bullet$};
            \node[lightgray] at (-2, 0) {$\bullet$};
        
            \node at (3-13, 3) {\tiny{$\bullet$}};
            \node at (8-13, 3) {\tiny{$\bullet$}};

            \draw [thick, decorate, decoration={brace, amplitude=5pt, mirror, raise=4pt}]
            (-10cm, -2.1) to node[below,yshift=-0.2cm] {$k_{k-1}$-Dyck path}(-5cm, -2.1);

            \draw [-, thick] (0, 0) to (1, 1);
            \node at (1.25, 1.25) {.};
            \node at (1.5, 1.5) {.};
            \node at (1.75, 1.75) {.};
            \draw [-, thick] (2, 2) to (3, 3);
        
            \draw [-, thick] (3, 3) .. controls (3.5, 6) .. (4.75, -1);
            \draw [-, thick] (4.75, -1) .. controls (5.5, 7) .. (6.5, -1);
            \draw [-, thick] (6.5, -1) .. controls (7.5, 6) .. (8, 3);
        
            \draw [-, thick] (8, 3) to (9, -2);
            \draw [-, thick] (9, -2) to (11, 0);
            \node[red] at (9, -2) {$\bullet$};
        
            \node at (3, 3) {\tiny{$\bullet$}};
            \node at (8, 3) {\tiny{$\bullet$}};

            \draw [thick, decorate, decoration={brace, amplitude=5pt, mirror, raise=4pt}]
            (3cm, -2.1) to node[below,yshift=-0.2cm] {$k_{k-1}$-Dyck path}(8cm, -2.1);

            \draw [-, thick] (11, 0) to (13, 2);
            \node at (13.25, 2.25) {.};
            \node at (13.5, 2.5) {.};
            \node at (13.75, 2.75) {.};
            \draw [-, thick] (14, 3) to (15, 4);
        
            \draw [-, thick] (15, 4) .. controls (15.5, 7) .. (16.75, -1);
            \draw [-, thick] (16.75, -1) .. controls (17.5, 8) .. (18.5, -1);
            \draw [-, thick] (18.5, -1) .. controls (19.5, 7) .. (20, 4);
        
            \draw [-, thick] (20, 4) to (21, -1);
            \draw [-, thick] (21, -1) to (22, 0);
            \node[red] at (21, -1) {$\bullet$};
            \node[lightgray] at (22, 0) {$\bullet$};
            \node[lightgray] at (11, 0) {$\bullet$};

            \node at (15, 4) {\tiny{$\bullet$}};
            \node at (20, 4) {\tiny{$\bullet$}};

            \draw [thick, decorate, decoration={brace, amplitude=5pt, mirror, raise=4pt}]
            (15cm, -2.1) to node[below,yshift=-0.2cm] {$k_{k-1}$-Dyck path}(20cm, -2.1);

            \draw [-, thick] (24, 0) to (27, 3);
            \node at (27.25, 3.25) {.};
            \node at (27.5, 3.5) {.};
            \node at (27.75, 3.75) {.};
            \draw [-, thick] (28, 4) to (29, 5);

            \draw [-, thick] (29, 5) .. controls (29.5, 8) .. (30.75, 1);
            \draw [-, thick] (30.75, 1) .. controls (31.5, 9) .. (32.5, 1);
            \draw [-, thick] (32.5, 1) .. controls (33.5, 8) .. (34, 5);

            \draw [-, thick] (34, 5) to (35, 0);
            \node[red] at (35, 0) {\large{$\bullet$}};
            \node[lightgray] at (35, 0) {\scalebox{0.5}{\faAdjust}};

            \node at (29, 5) {\tiny{$\bullet$}};
            \node at (34, 5) {\tiny{$\bullet$}};

            \draw [thick, decorate, decoration={brace, amplitude=5pt, mirror, raise=4pt}]
            (29cm, -2.1) to node[below,yshift=-0.2cm] {$k_{k-1}$-Dyck path}(34cm, -2.1);

            \draw[-, dashed, red] (-13,-3) to (-13, 8);
            \draw[-, dashed, red] (-2,-3) to (-2, 8);
            \draw[-, dashed, red] (-0,-3) to (-0, 8);
            \draw[-, dashed, red] (11,-3) to (11, 8);
            \draw[-, dashed, red] (22,-3) to (22, 8);
            \draw[-, dashed, red] (24,-3) to (24, 8);
            \draw[-, dashed, red] (35,-3) to (35, 8);
        \end{tikzpicture}
        \caption{The decomposition of a $k_{k-1}$-Dyck path into segments based on weak right-to-left minima $M$ (marked in red)
        and the corresponding returns to the $x$-axis $R$ (marked in gray).}
        \label{fig:decomp}
    \end{figure}
    So we have a decomposition into shorter $k_{k-1}$-Dyck paths (and some additional steps), which can be translated 
    to a generating function identity. 
    
    Let $F_{k-1}(z, x_1, \ldots, x_{k})$ be the generating function for $k_{k-1}$-Dyck paths where $z$ represents $(1, -k)$ steps, 
    and $x_j$ represents down-steps at height $j$ modulo $k$. For ease of notation, let $\xbar = (z, x_1, \ldots, x_k)$. The weak 
    right-to-left minima's $y$-coordinates read from 
    left to right are a (possibly empty) sequence of $1-k$ values, followed by a (possibly empty) sequences of $2-k$ values, and continues  
    in this way until it ends with a (possibly empty) sequence of $0$ values.    
    Let $\mathcal{K}_{k, t}$ be the class of $k_t$-Dyck paths, \up be a $(1, 1)$ step, and \down be a $(1, -k)$ step. Then using the 
    notation of \cite[Chapter I]{Flajolet-Sedgewick:ta:analy}, the symbolic decomposition of $\mathcal{K}_{k, k-1}$ according to 
    right-to-left minima is given by 
    \begin{equation*}
        \mathsf{SEQ}\big(\{\up\} \times \mathcal{K}_{k, k-1} \times \{\down\} \times (\{\up\})^{k-1}\big)\times \cdots \times
        \mathsf{SEQ}\big(\{\up\}^k \times \mathcal{K}_{k, k-1} \times \{\down\} \times \{\up\}^0\}\big) 
    \end{equation*}
    From this decomposition of $k_{k-1}$-Dyck paths, we obtain the functional equation 
    \begin{equation}\label{eq:sym-gen-func}
        F_{k-1}(\xbar) = \prod_{i=1}^{k}\frac{1}{1 - zx_i F_{k-1}(\xbar)}.
    \end{equation}
    Although the $k_{k-1}$-Dyck paths in the decomposition are shifted vertically, we can consider all height shifts of them 
    as equal due to Lemma~\ref{lem:wag-comb-interp}: the shifted paths are symmetric and thus have the same generating function.

    We now apply Lagrange Inversion to obtain the coefficients of the generating function~\eqref{eq:sym-gen-func} 
    which will prove the formula in~\eqref{prop:eqn}. 
    Note that the Lagrange Inversion Formula~\cite[Appendix A.6]{Flajolet-Sedgewick:ta:analy} states that if a power series $y$ 
    satisfies the equation $y(z) = z\Phi(y(z))$, where $\Phi$ is a formal power series with $\Phi(0) \neq 0$, then 
    \begin{equation*}
        [z^n]y(z) = \frac{1}{n}[t^{n-1}]\Phi(t)^n
    \end{equation*}
    for $n\ge 1$. To apply this, set $F_{k-1}^{*}(\xbar) = zF_{k-1}(\xbar)$. Then for $\Phi(t) = \prod_{i=1}^{k}\frac{1}{1 - x_i t}$, 
    \begin{equation*}
        F_{k-1}^{*}(\xbar) = z\prod_{i=1}^{k}\frac{1}{1 - x_i F_{k-1}^{*}(\xbar)} = z\Phi(F_{k-1}^{*}(\xbar)),
    \end{equation*}
    which is amenable to Lagrange Inversion, resulting in 
    \begin{equation*}
        [z^n]F_{k-1}(\xbar) = [z^{n+1}]F_{k-1}^{*}(\xbar) 
        = \frac{1}{n+1}[t^n]\prod_{i=1}^{k}\frac{1}{(1 - x_it)^{n+1}},
    \end{equation*}
    and thus for $n = a_1 + a_2 + \cdots + a_k$, 
    \begin{equation*}
        [z^nx_1^{a_1}x_2^{a_2}\cdots x_k^{a_k}]F_{k-1}(\xbar)
        = \frac{1}{n+1}[(tx_1)^{a_1}(tx_2)^{a_2}\cdots (tx_k)^{a_k}]\prod_{i=1}^{k}\frac{1}{(1 - x_it)^{n+1}}.
    \end{equation*}
    Thus we can consider just $[x_1^{a_1}\cdots x_{k}^{a_{k}}]$ 
    to obtain more specific information involving $a_i$-values while implicitly keeping track of the $t^n$ term. 
    It is clear that 
    \begin{equation*}
        [x_j^{a_j}]\prod_{i=1}^{k}\frac{1}{(1 - x_it)^{n+1}} = \binom{n+a_j}{a_j} t^{a_j}\prod_{\substack{i=1\\ i \neq j}}^{k}\frac{1}{(1 - x_it)^{n+1}},
    \end{equation*}
    and thus 
    \begin{equation*}
        [z^nx_1^{a_1}\cdots x_{k}^{a_{k}}]F_{k-1}(\xbar) = \frac{1}{n+1}\prod_{i=1}^{k}\binom{n+a_i}{a_i}.
    \end{equation*}
\end{proof}

\begin{theorem}\label{thm:main}
    Let $k$, $n$ and $t$ be non-negative integers with $0 \leq t \leq k-1$. For each $1 \leq i \leq k$ let $a_{i}$ 
    be a non-negative integer, where $a_1 + \cdots + a_k = n$. Then the number 
    of $k_t$-Dyck paths of length $(k+1)n$ with $a_i$ down-steps at a height of $i$ modulo $k$, $1 \leq i \leq k$, is
    \begin{equation}\label{eq:main}
        \big|\mathcal{K}_{k, t}^{n}(a_1, \ldots a_{k})\big| = \frac{a_{k-t}+\cdots+a_{k}}{n(n+1)}
        \prod_{i = k-t}^{k}\binom{n+a_i}{a_i}\prod_{i=1}^{k-t-1}\binom{n+a_i-1}{a_i}.
    \end{equation} 
\end{theorem}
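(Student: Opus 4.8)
The plan is to extend the generating-function argument of Proposition~\ref{prop:wag-gen-func} (which settles the case $t=k-1$) to arbitrary $t$. Let $F_t(\xbar)$, with $\xbar=(z,x_1,\dots,x_k)$, be the generating function for $k_t$-Dyck paths in which $z$ marks down-steps and $x_j$ marks a down-step whose endpoint has height congruent to $j$ modulo $k$, so that $[z^nx_1^{a_1}\cdots x_k^{a_k}]F_t(\xbar)=\big|\mathcal K_{k,t}^{n}(a_1,\dots,a_k)\big|$. I would decompose a $k_t$-Dyck path exactly as in the proof of Proposition~\ref{prop:wag-gen-func}: take its weak right-to-left minima $M$ and the first returns to the $x$-axis $R$ that follow them, and cut at the points of $R$. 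The only change is that, since the path now stays weakly above $y=-t$, the points of $M$ can occur only at heights $0,-1,\dots,-t$, so a point of $M$ at height $i-k$ forces $i\in\{k-t,\dots,k\}$. As before, the heights of the points of $M$ are weakly increasing along the path, each segment contains exactly one of them, say at height $i-k$, and --- since a down-step from a height strictly between $-t$ and $0$ would land below $-t$ --- such a segment consists of $i$ up-steps, an inner path, a down-step (marked $x_i$, with the convention $x_k$ when $i=k$), and $k-i$ up-steps, the inner path staying, measured from its own starting height, weakly above $-(k-1)$ and hence being a $k_{k-1}$-Dyck path. By Lemma~\ref{lem:wag-comb-interp} all vertical shifts of these inner $k_{k-1}$-Dyck paths are equidistributed, so the symbolic decomposition
\[
\prod_{i=k-t}^{k}\mathsf{SEQ}\big(\{\up\}^{i}\times\mathcal K_{k,k-1}\times\{\down\}\times\{\up\}^{k-i}\big)
\]
translates into the functional equation
\[
F_t(\xbar)=\prod_{i=k-t}^{k}\frac{1}{1-zx_iF_{k-1}(\xbar)},
\]
which for $t=k-1$ reduces to~\eqref{eq:sym-gen-func}.

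The second step is coefficient extraction. Setting $F_{k-1}^{*}(\xbar)=zF_{k-1}(\xbar)$, the proof of Proposition~\ref{prop:wag-gen-func} gives $F_{k-1}^{*}=z\,\Phi(F_{k-1}^{*})$ with $\Phi(u)=\prod_{i=1}^{k}(1-x_iu)^{-1}$, and the functional equation above reads $F_t(\xbar)=H\big(F_{k-1}^{*}(\xbar)\big)$ with $H(u)=\prod_{i=k-t}^{k}(1-x_iu)^{-1}$. The general form of Lagrange inversion \cite[Appendix A.6]{Flajolet-Sedgewick:ta:analy}, namely $[z^{n}]H(y)=\frac1n[u^{n-1}]\big(H'(u)\Phi(u)^{n}\big)$ whenever $y=z\Phi(y)$ and $\Phi(0)\neq0$, then yields for $n\ge1$
\[
[z^{n}]F_t(\xbar)=\frac1n\,[u^{n-1}]\big(H'(u)\,\Phi(u)^{n}\big),\qquad\text{where }H'(u)=H(u)\sum_{j=k-t}^{k}\frac{x_j}{1-x_ju}.
\]
Expanding the product and extracting $[x_1^{a_1}\cdots x_k^{a_k}]$ from each summand --- the power of $u$ then equals $n-1$ automatically, since $a_1+\cdots+a_k=n$ --- produces
\[
\big|\mathcal K_{k,t}^{n}(a_1,\dots,a_k)\big|=\frac1n\sum_{j=k-t}^{k}\binom{n+a_j}{a_j-1}\prod_{\substack{i=k-t\\ i\neq j}}^{k}\binom{n+a_i}{a_i}\prod_{i=1}^{k-t-1}\binom{n+a_i-1}{a_i}.
\]
Applying the identity $\binom{n+a_j}{a_j-1}=\frac{a_j}{n+1}\binom{n+a_j}{a_j}$ and pulling the common product out of the sum turns the right-hand side into $\frac{a_{k-t}+\cdots+a_k}{n(n+1)}\prod_{i=k-t}^{k}\binom{n+a_i}{a_i}\prod_{i=1}^{k-t-1}\binom{n+a_i-1}{a_i}$, which is~\eqref{eq:main}.

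I expect the structural step to be the main obstacle: one has to verify carefully that the weak right-to-left minima are confined to the heights $0,-1,\dots,-t$, that the inner pieces of each segment really are genuine $k_{k-1}$-Dyck paths (so that the \emph{known} series $F_{k-1}$, rather than some $t$-dependent series, enters, via Lemma~\ref{lem:wag-comb-interp}), and that the sequence structure over $i\in\{k-t,\dots,k\}$ factors as a product because the minima occur at weakly increasing heights. Once the functional equation is available, the rest is routine, and the asymmetry between the two products in~\eqref{eq:main} comes for free: the index $i$ enters through $(1-x_iu)^{-(n+1)}$ when $i\in\{k-t,\dots,k\}$ (a down-step at such a height may descend all the way down to $y=i-k$) and through $(1-x_iu)^{-n}$ otherwise. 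As a consistency check, note that~\eqref{eq:main} is manifestly symmetric in $(a_{k-t},\dots,a_k)$ and in $(a_1,\dots,a_{k-t-1})$ separately, in accordance with the Remark following Lemma~\ref{lem:wag-comb-interp}.
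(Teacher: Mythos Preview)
Your proposal is correct and follows essentially the same route as the paper: the same weak-right-to-left-minimum decomposition yielding the functional equation $F_t(\xbar)=\prod_{i=k-t}^{k}(1-zx_iF_{k-1}(\xbar))^{-1}$, the same application of the compositional form of Lagrange inversion with $H(u)=\prod_{i=k-t}^{k}(1-x_iu)^{-1}$ and $\Phi(u)=\prod_{i=1}^{k}(1-x_iu)^{-1}$, and the same binomial simplification $\binom{n+a_j}{a_j-1}=\frac{a_j}{n+1}\binom{n+a_j}{a_j}$ to reach~\eqref{eq:main}. The only cosmetic difference is that the paper cites Gessel~\cite{Gessel:2016:lagrange} for the Lagrange inversion variant rather than Flajolet--Sedgewick.
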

\begin{proof}
    We decompose $k_{t}$-Dyck paths in an analogous way to that of the proof of Proposition~\ref{prop:wag-gen-func}: by determining 
    the weak right-to-left minima, and splitting the path into segments according to the first return of the path to the $x$-axis 
    after the minima. The decomposition for the case $t = k-1$ can be seen in Figure~\ref{fig:decomp}. Analogously, 
    using $F_t(\xbar)$ to denote the generating function for $k_t$-Dyck paths where $z$ counts the number
    of $(1, -k)$ steps and $x_j$ counts the number of $x_j$ steps at height $j$ modulo $k$, we obtain the functional equation
    \begin{equation}\label{eq:general-func-eq}
        F_t(\xbar) = \prod_{i = k-t}^{k}\frac{1}{1-zx_i F_{k-1}(\xbar)}.
    \end{equation}
    We now apply Lagrange Inversion in the form as given in \cite[Equation 2.1.1]{Gessel:2016:lagrange}, 
    which states that 
    if $y(z) = z\Phi(y(z))$ where $\Phi$ is a formal power series with $\Phi(0) \neq 0$, then for a power 
    series $g$, we have that 
    \begin{equation*}
        [z^n]g(y(z)) = \frac{1}{n}[s^{n-1}]g'(s)\Phi(s)^n, \qquad \text{for } n \geq 1.
    \end{equation*}
    This is satisfied in the following way: Set $y(z) = F_{k-1}^{*}(\xbar)\coloneqq zF_{k-1}(\xbar)$ and setting 
    \begin{equation*}
        g(s) = \prod_{i = k-t}^{k}\frac{1}{1- x_i s},
    \end{equation*}
    we see that $F_t(\xbar) = g(F_{k-1}^{*}(\xbar))$. Additionally, $\Phi$ is the same as in Proposition~\ref{prop:wag-gen-func}.
    We differentiate $g$ to obtain
    \begin{equation*}
        g'(y) = \sum_{j = k-t}^{k}\frac{x_j}{1-x_j y}\prod_{i=k-t}^{k}\frac{1}{1-x_iy},
    \end{equation*}
    and thus we find that the coefficients are
    \begin{equation}\label{eq:coeff-lagrange-2}
        [z^{n}x_1^{a_1}\cdots x_{k}^{a_{k}}]g(y) = \frac{1}{n}[s^{n-1}x_1^{a_1}\cdots x_{k}^{a_{k}}]\sum_{j = k-t}^{k}\frac{x_j}{1-x_j s}\prod_{i=k-t}^{k}\frac{1}{(1-x_is)^{n+1}}
        \prod_{i=1}^{k-t-1}\frac{1}{(1-x_is)^n}. 
    \end{equation}
    In the equations below we can ignore the $s^{n-1}$ term after the second equality and thenceforth set $s = 1$. This is 
    because we once again note that 
    \begin{equation*}
        [s^{n-1}x_1^{a_1}\cdots x_{j-1}^{a_{j-1}}x_{j}^{a_{j}-1}x_{j+1}^{a_{j+1}}\cdots x_{k}^{a_{k}}] = [(x_1s)^{a_1}\cdots (x_{j-1}s)^{a_{j-1}}(x_{j}s)^{a_{j}-1}(x_{j+1}s)^{a_{j+1}}\cdots (x_{k}s)^{a_{k}}], 
    \end{equation*}
    since the sum of all $a_i$ terms for $1 \leq i \leq k$ is $n-1$. By~\eqref{eq:coeff-lagrange-2}, we have
    \begin{align*}
        n[z^{n}&x_1^{a_1}\cdots x_{k}^{a_{k}}]g(y) \\
        & = [s^{n-1}x_1^{a_1}\cdots x_{k}^{a_{k}}]\sum_{j = k-t}^{k}\frac{x_j}{1-x_j s}\prod_{i=k-t}^{k}\frac{1}{(1-x_is)^{n+1}}
        \prod_{i=1}^{k-t-1}\frac{1}{(1-x_is)^n}\\
        & = \sum_{j=k-t}^{k}[x_1^{a_1}\cdots x_{j-1}^{a_{j-1}}x_{j}^{a_{j}-1}x_{j+1}^{a_{j+1}}\cdots x_{k}^{a_{k}}]\frac{1}{(1-x_j)^{n+2}}\prod_{\substack{i=k-t \\ i \neq j}}^{k}\frac{1}{(1-x_i)^{n+1}}
        \prod_{i=1}^{k-t-1}\frac{1}{(1-x_i)^n}\\
        & = \sum_{j=k-t}^{k}\binom{n+a_j}{a_j-1}\prod_{\substack{i=k-t \\ i \neq j}}^{k}\binom{n+a_i}{a_i}\prod_{i=1}^{k-t-1}\binom{n+a_i-1}{a_i}\\
        & = \sum_{j=k-t}^{k}\frac{a_j}{n+1}\binom{n+a_j}{a_j}\prod_{\substack{i=k-t \\ i \neq j}}^{k}\binom{n+a_i}{a_i}\prod_{i=1}^{k-t-1}\binom{n+a_i-1}{a_i}\\
        & = \frac{1}{n+1}\prod_{i=k-t}^{k}\binom{n+a_i}{a_i}\prod_{i=1}^{k-t-1}\binom{n+a_i-1}{a_i}\sum_{j=k-t}^{k}a_j.
    \end{align*}
    Altogether for $n \geq 1$ this gives
    \begin{equation*}
        [z^{n}x_1^{a_1}\cdots x_{k}^{a_{k}}]F_{t}(\xbar) = \frac{\sum_{j=k-t}^{k}a_j}{n(n+1)}\prod_{i=k-t}^{k}\binom{n+a_i}{a_i}\prod_{i=1}^{k-t-1}\binom{n+a_i-1}{a_i},
    \end{equation*}
    which is what we wanted to prove. 
\end{proof}

The result obtained is demonstrated in Table~\ref{tab:permutations-and-values} for $k=3$ and $n=4$. Note that the last row is independent 
of permutations of $a_1a_2a_3$, as proved in Lemma~\ref{lem:wag-comb-interp}. Additionally, summing across rows, it can be checked that the Raney 
number~\eqref{eq:raney} for $k=3$, $n = 4$, and $t$ as given in the corresponding row is obtained. 
\begin{table}[ht]

    \begin{tabular}{|c||c|c|c|c|c|c|c|c|c|c|c|c|c|c|c|}
        \hline
        $a_1a_2a_3$ & 004 & 040 & 400 & 013 & 031 & 103 & 130 & 301 & 310 & 022 & 202 & 220 & 112 & 121 & 211 \\
        \hline
        \hline
        $t = 0$ & 14 & 0 & 0 & 21 & 5 & 21 & 0 & 5 & 0 & 15 & 15 & 0 & 24 & 10 & 10 \\ 
        \hline
        $t = 1$ & 14 & 14 & 0 & 35 & 35 & 21 & 21 & 5 & 5 & 45 & 15 & 15 & 45 & 45 & 25 \\ 
        \hline
        $t = 2$ & 14 & 14 & 14 & 35 & 35 & 35 & 35 & 35 & 35 & 45 & 45 & 45 & 75 & 75 & 75 \\ 
        \hline
    \end{tabular}
    \caption{The number of $k_t$-Dyck paths of length $16$ with $k = 3$ which have $a_i$ down-steps at a height of $i$ modulo $k$.}
    \label{tab:permutations-and-values}
\end{table}

\begin{remark}[On Schur-positivity]
    In Lemma~\ref{lem:wag-comb-interp}, it was proven that $\big|\mathcal{K}_{k, k-1}^{n}(a_1, \ldots a_{k})\big|$ is symmetric 
    in $a_1,\ldots,a_k$. Equivalently, the polynomial
    \begin{equation*}
    [z^n] F_{k-1}(\xbar)
    \end{equation*}
    is a symmetric polynomial in $x_1,\ldots,x_k$. As the proof of Proposition~\ref{prop:wag-gen-func} shows, this polynomial can be expressed as
    \begin{equation*}
    [z^n] F_{k-1}(\xbar) = \frac{1}{n+1} [t^n] \prod_{i=1}^k \frac{1}{(1-x_i t)^{n+1}}.
    \end{equation*}
    Generating functions of this form are familiar from the theory of Schur polynomials. Indeed, with $s_{\lambda}$ denoting the Schur polynomial 
    associated with a partition $\lambda$, we have (see e.g.~\cite[Corollary 8.16]{Aigner:2007:course-enumeration})
    \begin{equation*}
    \sum_{\lambda} s_{\lambda}(x_1,x_2,\ldots) s_{\lambda}(y_1,y_2,\ldots) = \prod_{i,j \geq 1} \frac{1}{1-x_i y_j}.\end{equation*}
    Setting $x_{k+1}=x_{k+2} = \cdots = y_{n+2} = y_{n+3} = \cdots = 0$ and $y_1 = y_2 = \cdots = y_{n+1} = t$, we find that
    \begin{equation*}
    [z^n] F_{k-1}(\xbar) = \frac{1}{n+1} [t^n] \sum_{\lambda} s_{\lambda}(x_1,x_2,\ldots,x_k) s_{\lambda}(t,t,\ldots,t).
    \end{equation*}
    Note here that $[t^n] s_{\lambda}(t,t,\ldots,t)$ is non-zero only when $\lambda$ is a partition of $n$. In this case, it is exactly the 
    number of semistandard Young tableaux of shape $\lambda$ with entries in $\{1,2,\ldots,n+1\}$. This representation shows in particular that 
the polynomial $[z^n] F_{k-1}(\xbar)$ is Schur-positive.
\end{remark}

\begin{theorem}\label{thm:total}
    For fixed non-negative integers $k$, $t$, and $n$ with $0 \leq t < k$, in $k_t$-Dyck paths of length $(k+1)n$ the 
    total number of down-steps $(1, -k)$ at a height of $i$ modulo $k$ is:
    \begin{enumerate} 
    \item \label{thm:part-1} if $k-t \leq i \leq k$, 
    \begin{equation*}
        \frac{(t+1)n+k+1}{n-1}\binom{(k+1)n+t}{n-2}.
    \end{equation*}
    \item \label{thm:part-2} if $1 \leq i \leq k-t-1$,
    \begin{equation*}
        (t+1)\binom{(k+1)n+t}{n-2}.
    \end{equation*}
    \end{enumerate}
\end{theorem}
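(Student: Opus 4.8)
The plan is to obtain both formulas by summing the exact count of Theorem~\ref{thm:main} weighted by $a_i$, which is most efficiently done through the generating function $F_t(\xbar)$ appearing in its proof. Since the total number of down-steps at a height of $i$ modulo $k$, taken over all $k_t$-Dyck paths of length $(k+1)n$, is $\sum_{a_1+\cdots+a_k=n}a_i\,\big|\mathcal{K}_{k,t}^n(a_1,\ldots,a_k)\big|$, it equals $\big[z^n\big]\big(\partial_{x_i}F_t(\xbar)\big)\big|_{x_1=\cdots=x_k=1}$. By the remark following Lemma~\ref{lem:wag-comb-interp}, this quantity depends only on whether $i$ lies in $\{k-t,\ldots,k\}$ or in $\{1,\ldots,k-t-1\}$ (the latter set being empty precisely when $t=k-1$, in which case part~\ref{thm:part-2} is vacuous), so it suffices to compute one representative in each range.

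Write $\mathbf 1$ for the specialization $x_1=\cdots=x_k=1$ and let $y$ denote the specialization of $F_{k-1}^*(\xbar)=zF_{k-1}(\xbar)$. The functional equation~\eqref{eq:sym-gen-func} then reads $y=z(1-y)^{-k}$, i.e.\ $z=y(1-y)^{k}$, so $F_{k-1}\big|_{\mathbf 1}=(1-y)^{-k}$ and, by~\eqref{eq:general-func-eq}, $F_t\big|_{\mathbf 1}=(1-y)^{-(t+1)}$. Differentiating $\log F_{k-1}=-\sum_{j=1}^{k}\log(1-zx_jF_{k-1})$ with respect to $x_i$ and evaluating at $\mathbf 1$ gives a linear equation for $\partial_{x_i}F_{k-1}\big|_{\mathbf 1}$ (using $zF_{k-1}\big|_{\mathbf 1}=y$), with solution $\partial_{x_i}F_{k-1}\big|_{\mathbf 1}=\dfrac{y}{(1-y)^{k}(1-(k+1)y)}$, hence $z\,\partial_{x_i}F_{k-1}\big|_{\mathbf 1}=\dfrac{y^{2}}{1-(k+1)y}$. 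Doing the same with $\log F_t=-\sum_{j=k-t}^{k}\log(1-zx_jF_{k-1})$ and substituting, one finds
\begin{equation*}
\partial_{x_i}F_t\big|_{\mathbf 1}=
\begin{cases}
\dfrac{y\,\bigl(1-(k-t)y\bigr)}{(1-y)^{t+2}\,\bigl(1-(k+1)y\bigr)}, & k-t\le i\le k,\\[2.2ex]
\dfrac{(t+1)\,y^{2}}{(1-y)^{t+2}\,\bigl(1-(k+1)y\bigr)}, & 1\le i\le k-t-1,
\end{cases}
\end{equation*}
the difference between the two cases coming solely from the Kronecker term $z\delta_{ij}F_{k-1}$, which occurs in the sum only when $i\in\{k-t,\ldots,k\}$.

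For the last step I would extract $[z^n]$ by Lagrange inversion in the form $[z^n]\frac{H(y)}{1-z\phi'(y)}=[y^n]H(y)\phi(y)^n$ (with $\phi(s)=(1-s)^{-k}$; see~\cite{Gessel:2016:lagrange}). Since $1-z\phi'(y)=\frac{1-(k+1)y}{1-y}$, both expressions above are of the form $\frac{H(y)}{1-z\phi'(y)}$ with $H(y)=\frac{y(1-(k-t)y)}{(1-y)^{t+3}}$, respectively $H(y)=\frac{(t+1)y^{2}}{(1-y)^{t+3}}$, so the coefficient is $[y^n]H(y)(1-y)^{-kn}$, a matter of reading off coefficients of a negative binomial series. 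In the first case this yields $\binom{(k+1)n+t+1}{n-1}-(k-t)\binom{(k+1)n+t}{n-2}$, which simplifies to $\frac{(t+1)n+k+1}{n-1}\binom{(k+1)n+t}{n-2}$ after factoring out $\binom{(k+1)n+t}{n-2}$; in the second case it is $(t+1)\binom{(k+1)n+t}{n-2}$ directly. The only genuinely delicate part is the logarithmic differentiation of the two functional equations and keeping track of when the Kronecker term contributes; the Lagrange step and the binomial simplification are routine. A useful check is that weighting part~\ref{thm:part-1} by $t+1$ and part~\ref{thm:part-2} by $k-t-1$ and adding must return $n$ times the Raney number~\eqref{eq:raney}, since every path of length $(k+1)n$ has exactly $n$ down-steps; the cases $n\le 1$ are immediate.
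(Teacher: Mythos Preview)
Your argument is correct. Both you and the paper start from the same identity, namely that the desired total equals $[z^n]\partial_{x_i}F_t(\xbar)\big|_{x_1=\cdots=x_k=1}$, but the executions diverge from there. The paper differentiates the already Lagrange-inverted expression~\eqref{eq:coeff-lagrange-2} with respect to $x_i$, sets all $x_j=1$, and reads off $[s^{n-2}](t+1)(1-s)^{-(kn+t+3)}$ to obtain part~\ref{thm:part-2} directly; for part~\ref{thm:part-1} it then uses the counting shortcut you mention only as a sanity check, namely that the grand total over all $i$ must be $n$ times the Raney number~\eqref{eq:raney}, so the value for each $i\ge k-t$ is obtained by subtracting $(k-t-1)$ copies of the part~\ref{thm:part-2} answer and dividing by $t+1$. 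You instead differentiate the functional equations~\eqref{eq:sym-gen-func} and~\eqref{eq:general-func-eq} logarithmically, obtain closed rational expressions in $y$ for $\partial_{x_i}F_t|_{\mathbf 1}$ in both ranges, and then apply the ``second form'' of Lagrange inversion $[z^n]\,H(y)/(1-z\phi'(y))=[y^n]H(y)\phi(y)^n$ to extract both coefficients uniformly. Your route is a bit more machinery-heavy but treats the two cases symmetrically and avoids appealing to the Raney total; the paper's route is lighter for part~\ref{thm:part-2} and replaces the second computation by an elegant global count. Either way the binomial simplification in part~\ref{thm:part-1} is the same.
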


\begin{proof}
    Let $F_t(\xbar)$ be defined as in the proof of Theorem~\ref{thm:main}. From~\eqref{eq:coeff-lagrange-2}, we have that
    \begin{equation*}
        [z^n] F_t(\xbar) = \frac{1}{n} [s^{n-1}] \sum_{j = k-t}^{k}\frac{x_j}{1-x_j s}\prod_{i=k-t}^{k}\frac{1}{(1-x_is)^{n+1}}
        \prod_{i=1}^{k-t-1}\frac{1}{(1-x_is)^n}.
    \end{equation*}
    Note that by determining the partial derivative of $F_{t}(\xbar)$ with 
    respect to $x_i$ and then setting $x_1 = x_2 = \cdots = x_k = 1$, we obtain a 
    generating function for the total number of down-steps $(1, -k)$ at a height 
    of $i$ modulo $k$ with respect to number of down-steps. Thus we begin by determining an expression for the 
    partial derivative of $F_{k-1}^{*}(\xbar)$ with respect to $x_i$, where $i$ is fixed and $1 \leq i \leq k$.
    So we differentiate with respect to $x_i$ and plug in $1$ for $x_1,\ldots,x_k$. For $i \leq k-t-1$, this yields
    \begin{align*}
        [z^n] \frac{\partial}{\partial x_i}  F_t(\xbar) \Big|_{x_1=\cdots=x_k = 1} &= \frac{1}{n} [s^{n-1}] \frac{t+1}{1-s} \cdot \frac{1}{(1-s)^{(t+1)(n+1)}} \cdot \frac{1}{(1-s)^{(k-t-2)n}} \cdot \frac{sn}{(1-s)^{n+1}} \\
            &= [s^{n-2}] \frac{(t+1)}{(1-s)^{kn+t+3}} \\
            &= (t+1) \binom{(k+1)n+t}{n-2}.
    \end{align*}
    For $i \geq k-t$, we can either use the same approach or make use of the fact that the number must be the same for all 
    such $i$ by symmetry, coupled with the fact that the total number of down-steps in all $k_t$-Dyck paths of length $(k+1)n$ is 
    \begin{equation*}
        n \cdot \frac{(t+1)}{(k+1)n+t+1} \binom{(k+1)n+t+1}{n} = (t+1) \binom{(k+1)n+t}{n-1}
    \end{equation*}
    by~\eqref{eq:raney}. Thus for $i \geq k-t$, the total number of down-steps at a height of $i$ modulo $k$ is
    \begin{equation*}
        \frac{(t+1) \binom{(k+1)n+t}{n-1} - (k-t-1) \cdot (t+1) \binom{(k+1)n+t}{n-2}}{t+1},
    \end{equation*}
    which simplifies to
    \begin{equation*}
        \frac{(t+1)n+k+1}{n-1} \binom{(k+1)n+t}{n-2}.
    \end{equation*}
\end{proof}
\begin{remark}
    Note that this proof could also be done by multiplying the formula in Theorem~\ref{thm:main} by $a_i$ and then summing over 
    all possible $k$-tuples $(a_1, a_2, \ldots ,a_k)$ and using the Vandermonde identity to simplify the summation.
\end{remark}

\begin{corollary}
    For fixed non-negative integers $k$, $t$, and $n$, with $0 \leq t < k$, the average number of  
    down-steps $(1, -k)$ at a height of $i$ modulo $k$ in $k_t$-Dyck paths of length $(k+1)n$ is:
    \begin{enumerate} 
    \item \label{cor:part-1} if $k-t \leq i \leq k$, 
    \begin{equation*}
        \frac{n((t+1)n+k+1)}{(kn+t+2)(t+1)}.
    \end{equation*}
    \item \label{cor:part-2} if $1 \leq i \leq k-t-1$,
    \begin{equation*}
        \frac{n(n-1)}{kn+t+2}.
    \end{equation*}
    \end{enumerate}
\end{corollary}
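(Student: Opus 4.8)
The plan is to divide the total counts established in Theorem~\ref{thm:total} by the total number of $k_t$-Dyck paths of length $(k+1)n$, which by~\eqref{eq:raney} equals the Raney number $\frac{t+1}{(k+1)n+t+1}\binom{(k+1)n+t+1}{n}$. By definition the average of a statistic is (its sum over all objects) divided by (the number of objects), so both parts of the corollary drop out once the quotient of binomial coefficients is put in closed form.

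First I would record the binomial quotient common to both cases. Writing $N = (k+1)n+t$, a short factorial manipulation gives
\[
    \frac{\binom{N}{n-2}}{\binom{N+1}{n}} = \frac{n!}{(n-2)!}\cdot\frac{(N+1-n)!}{(N-n+2)!}\cdot\frac{N!}{(N+1)!} = \frac{n(n-1)}{(N+1)(N-n+2)} = \frac{n(n-1)}{\bigl((k+1)n+t+1\bigr)(kn+t+2)}.
\]
Then for part~\eqref{cor:part-1}, I would multiply the total $\frac{(t+1)n+k+1}{n-1}\binom{(k+1)n+t}{n-2}$ from Theorem~\ref{thm:total}\eqref{thm:part-1} by $\frac{(k+1)n+t+1}{t+1}$ and by $\binom{(k+1)n+t+1}{n}^{-1}$; using the displayed quotient, the factors $(k+1)n+t+1$ and $n-1$ cancel, leaving $\frac{n\bigl((t+1)n+k+1\bigr)}{(t+1)(kn+t+2)}$. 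For part~\eqref{cor:part-2}, the same three factors applied to $(t+1)\binom{(k+1)n+t}{n-2}$ cause the $t+1$'s to cancel and leave $\frac{n(n-1)}{kn+t+2}$.

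There is essentially no obstacle: the only care needed is the factorial bookkeeping in the binomial quotient, together with the implicit hypothesis $n \geq 2$ (for $n < 2$ both the statistic and the totals in Theorem~\ref{thm:total} vanish, so the formulas are read in the obvious way or simply excluded). As an alternative one could instead invoke the remark following Theorem~\ref{thm:total} and obtain the numerators by summing $a_i\,\bigl|\mathcal{K}_{k,t}^{n}(a_1,\ldots,a_k)\bigr|$ over all tuples via Vandermonde's identity, but dividing the already-established totals by the Raney number is the most direct route.
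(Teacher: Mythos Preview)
Your proof is correct and follows exactly the approach of the paper, which simply states that the result follows from Theorem~\ref{thm:total} by dividing each quantity by the total number of $k_t$-Dyck paths given in~\eqref{eq:raney}. You have in fact supplied more detail than the paper does, explicitly working out the binomial quotient and the cancellations.
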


\begin{proof}
    This follows from Theorem~\ref{thm:total}, dividing each quantity by the total number of $k_t$-Dyck paths of 
    length $(k+1)n$ given in~\eqref{eq:raney}.
\end{proof}

\section{Related sequences}

In this section we use the phrase ``\textit{$k_t$-Dyck paths with $(a_1, \ldots, a_k)$}'' to mean the set of \mbox{$k_t$-Dyck} paths with down-step 
statistics at heights modulo $k$ of $(a_1, \ldots, a_k)$, where the $i$-th entry of the (1-indexed) tuple is the number of down-steps at a 
height of $i$ modulo $k$.

Note that there is a bijection between $k_t$-Dyck paths with $(a_1, \ldots, a_k)$ and $(k+1)_t$-Dyck 
paths with $(0, a_1, \ldots, a_k)$. This is because 
of a simple correspondence: inserting an up-step $(1, 1)$ before every up-step at a height of $k$ modulo $k$ and 
extending all down-steps to be $(1, -(k+1))$ converts 
a $k_t$-Dyck path with $(a_1, \ldots, a_k)$ to a $(k+1)_t$-Dyck path with $(0, a_1, \ldots, a_k)$, and the reverse 
follows plainly. This bijection can be extended inductively. Therefore we 
will state cases of interesting sequences in their simplest form, but analogous results can be extracted for higher values than $k$. Additionally, 
Lemma~\ref{lem:wag-comb-interp} provides a basis for reordering $a_i$'s. The following sequences of $(a_1, \ldots, a_k)$ give rise to interesting 
relationships between the height of down-steps modulo $k$ in $k_t$-Dyck paths and other combinatorial objects. Some of these relationships will be 
explored in the subsections that follow:
\begin{itemize}
    \item[\ref{subsec:peaks}] In this section three straightforward bijections between $k_t$-Dyck paths with a prescribed $(a_1, a_2)$
    and other combinatorial objects are given. The sequence OEIS~\href{https://oeis.org/A001700}{A001700} with 
    offset\footnote{By offset [value] we mean that the sequence in the OEIS is shifted [value] positions in order to obtain the sequence we refer 
    to. For example, the sequence $1, 1, 1, \ldots$ is the sequence $0, 0, 1, 1, 1, \ldots$ with offset 2.} 1 corresponds to $2_1$-Dyck 
    paths of length $3n$ with $(1, n-1)$. A bijection between these paths and the number of peaks in Dyck paths of length $2n$ is given in detail. 
    Additionally, a bijection between $2_0$-Dyck paths of length $3n$ with $(1, n-1)$ and the number of valleys in all Dyck paths of length $2n$ 
    is described. These objects have counting sequence OEIS~\href{https://oeis.org/A002054}{A002054} with offset 1. Finally, we show that $2_0$-Dyck 
    paths of length $3n$ with $(n-1, 1)$ are in bijection with Dyck paths of length $2n-2$, which are counted by the sequence 
    OEIS~\href{https://oeis.org/A000108}{A000108} (Catalan numbers) with offset 1.
    \item[\ref{subsec:2}] OEIS~\href{https://oeis.org/A002740}{A002740}, $(2, n-2)$ for $t = 0$ with offset 0. Simplifying this case in~\eqref{eq:main}, it 
    becomes $\frac{n-2}{2}\binom{2n-2}{n}$ for $n \geq 2$ and $0$ otherwise. According to the OEIS, this sequence counts the sum of the indices of the down-steps in all valleys 
    (a down-step followed immediately by an up-step) in Dyck paths of length $2n-2$.   
\end{itemize}
Several other special cases of~\eqref{eq:main} occur in the OEIS without combinatorial interpretations at present, including 
OEIS~\href{https://oeis.org/A110609}{A110609} with offset 1 for $3_1$-Dyck paths of length $3n$ with $(1, 1, n-2)$, and 
OEIS~\href{https://oeis.org/A188681}{A188681} with offset 0 for $2_1$-Dyck paths of length $6n$ with $(n, n)$. 

\subsection{Three straightforward bijective relationships between $2_t$-Dyck paths with a prescribed $(a_1, a_2)$
and other combinatorial objects}
\label{subsec:peaks}
There is a simple bijection between $2_1$-Dyck paths of length $3n$ with $(a_1, a_2) = (1, n-1)$ and peaks in Dyck paths of length $2n$. 
Note that other correspondences follow quite readily, for example, peaks in Dyck paths of length $2n$ under the so-called ``glove bijection''
translate into leaves in rooted ordered trees with $n$ edges. See Figure~\ref{fig:peak-mapping} for an example of the mapping.

The correspondence goes as follows: since there is exactly one down-step $(1, -2)$ at height $1$ modulo $2$, this step is preceded and 
followed by an up-step $(1, 1)$. This sequence of up-down-up can then be exchanged for a sequence of up-up-down, marking the down-step. 
In this way we have created a $2_0$-Dyck path which consists only of down-steps at a height of $0$ modulo $2$, and thus can reduce every 
two up-steps to one up-step $(1, 1)$, and every down-step $(1, -2)$ to a shorter down-step $(1, -1)$. The marked down-step is by construction 
preceded by an up-step, and uniquely marks a peak of the resulting Dyck path. This mapping is easily reversible. 

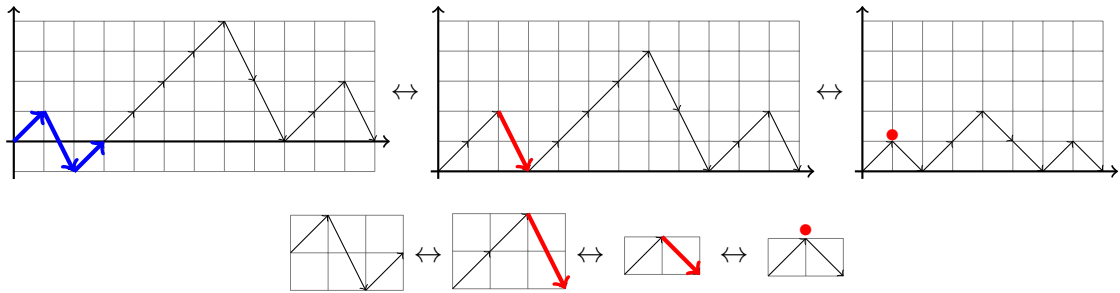
\begin{figure}[ht]
    \begin{tikzpicture}[scale=0.4]
        \draw[help lines] (0,-1) grid (12, 4);
        \draw[thick, ->] (-0.25, 0) -- (12.5, 0);
        \draw[thick, ->] (0, -1.25) -- (0, 4.5);
        \drawlatticepath{1, -2, 1, 1, 1, 1, 1, -2, -2, 1, 1, -2}
        \draw[->, ultra thick, blue] (0, 0) to (1, 1);
        \draw[->, ultra thick, blue] (1, 1) to (2, -1);
        \draw[->, ultra thick, blue] (2, -1) to (3, 0);
    \end{tikzpicture}\raisebox{2.8em}{$\leftrightarrow$}
    \begin{tikzpicture}[scale=0.4]
        \draw[help lines] (0,0) grid (12, 5);
        \draw[thick, ->] (-0.25, 0) -- (12.5, 0);
        \draw[thick, ->] (0, -0.25) -- (0, 5.5);
        \drawlatticepath{1, 1, -2, 1, 1, 1, 1, -2, -2, 1, 1, -2}
        \draw[->, ultra thick, red] (2, 2) to (3, 0);
    \end{tikzpicture}\raisebox{2.8em}{$\leftrightarrow$}
    \begin{tikzpicture}[scale=0.4]
        \draw[help lines] (0,0) grid (8, 5);
        \draw[thick, ->] (-0.25, 0) -- (8.5, 0);
        \draw[thick, ->] (0, -0.25) -- (0, 5.5);
        \drawlatticepath{1, -1, 1, 1, -1, -1, 1, -1}
        \node[red] at (1, 1.2) {$\bullet$};
    \end{tikzpicture}
    \vspace{1em}

    \begin{tikzpicture}[scale=0.5]
        \draw[help lines] (0,-1) grid (3,1);
        \drawlatticepath{1, -2, 1}
    \end{tikzpicture} \raisebox{1em}{$\leftrightarrow$}
    \begin{tikzpicture}[scale=0.5]
        \draw[help lines] (0, 0) grid (3,2);
        \drawlatticepath{1, 1, -2}
        \draw[->, ultra thick, red] (2, 2) to (3, 0);
    \end{tikzpicture}\;\raisebox{1em}{$\leftrightarrow$}
    \raisebox{0.5em}{
    \begin{tikzpicture}[scale=0.5]
        \draw[help lines] (0,0) grid (2,1);
        \drawlatticepath{1, -1}
        \draw[->, ultra thick, red] (1, 1) to (2, 0);
    \end{tikzpicture}}
    \;\raisebox{1em}{$\leftrightarrow$}
    \raisebox{0.5em}{
    \begin{tikzpicture}[scale=0.5]
        \draw[help lines] (0,0) grid (2,1);
        \drawlatticepath{1, -1}
        \node[red] at (1, 1.2) {$\bullet$};
    \end{tikzpicture}}

    \caption{The mapping between a $2_1$-Dyck path of length $12$ with $(a_1, a_2) = (1, 3)$ and a peak in a Dyck path of length $8$.}
    \label{fig:peak-mapping}
\end{figure}

A similar bijection holds between OEIS~\href{https://oeis.org/A002054}{A002054} with offset 1, which is the sequence for the number of valleys in
all Dyck paths of length $2n$, and $2_0$-Dyck paths of length $3n$ with $(1, n-1)$. The down-step $(1, -2)$ is shifted one up-step $(1, 1)$ 
to the left, creating a sequence down-up-up, which can be marked as a valley. Then `reduce' pairs of up-steps and down-steps as described 
previously to get a Dyck path with a marked valley. 

The final closely related bijection relates $2_0$-Dyck paths of length $3n$ with $(n-1, 1)$ and Dyck paths of length $2n-2$ 
(OEIS~\href{https://oeis.org/A000108}{A000108} with offset 1). The single down-step
at height $0$ modulo $2$ must be at the end of the path for it to end on the $x$-axis. We remove the first up-step in the path, and also remove the 
final up-step and down-step (the final up-step is immediately before the final down-step), to obtain a path of length $3n-3$ with all down-steps at 
a height of $0$ modulo $2$. Again `reduce' pairs of up-steps and down-steps to obtain a Dyck path of length $2n-2$. 

\subsection{The family of $2_0$-Dyck paths of length $3n$ with $(a_1, a_2) = (2, n-2)$ is enumerated by the sum of indices of the down-steps 
in all valleys in Dyck paths of length $2(n-1)$}
\label{subsec:2}
Since the sum of indices of the down-steps in all valleys (down-steps followed by up-steps) in Dyck paths of length $2n-2$ is being considered, 
we represent the total sum using double-marked Dyck paths of length $2n-2$. That is, if a Dyck path has a valley whose down-step occurs at 
index $i$, then we mark the valley in question, and create $i$ copies of the path where exactly one (unique) step in the path is marked in each 
copy. See Figure~\ref{fig:markings} below for an example of this marking. 

\begin{figure}[ht]
    \begin{tikzpicture}[scale = 0.4]
        \draw[help lines] (0, 0) grid (8, 3);
        \draw[->, thick] (-0.5, 0) to (8, 0);
        \draw[->, thick] (0, -0.5) to (0, 3);

        \drawlatticepath{1, 1, -1, 1, -1, -1, 1, -1}
        \draw[->, ultra thick, red] (0, 0) to (1, 1);

        \node at (3, 0.7) {$\bullet$}; 
    \end{tikzpicture}
    \begin{tikzpicture}[scale = 0.4]
        \draw[help lines] (0, 0) grid (8, 3);
        \draw[->, thick] (-0.5, 0) to (8, 0);
        \draw[->, thick] (0, -0.5) to (0, 3);

        \drawlatticepath{1, 1, -1, 1, -1, -1, 1, -1}
        \draw[->, ultra thick, red] (1, 1) to (2, 2);

        \node at (3, 0.7) {$\bullet$}; 
    \end{tikzpicture}
    \begin{tikzpicture}[scale = 0.4]
        \draw[help lines] (0, 0) grid (8, 3);
        \draw[->, thick] (-0.5, 0) to (8, 0);
        \draw[->, thick] (0, -0.5) to (0, 3);

        \drawlatticepath{1, 1, -1, 1, -1, -1, 1, -1}
        \draw[->, ultra thick, red] (2, 2) to (3, 1);

        \node at (3, 0.7) {$\bullet$}; 
    \end{tikzpicture}

    \begin{tikzpicture}[scale = 0.4]
        \draw[help lines] (0, 0) grid (8, 3);
        \draw[->, thick] (-0.5, 0) to (8, 0);
        \draw[->, thick] (0, -0.5) to (0, 3);

        \drawlatticepath{1, 1, -1, 1, -1, -1, 1, -1}
        \draw[->, ultra thick, red] (0, 0) to (1, 1);

        \node at (6, -0.35) {$\bullet$}; 
    \end{tikzpicture}
    \begin{tikzpicture}[scale = 0.4]
        \draw[help lines] (0, 0) grid (8, 3);
        \draw[->, thick] (-0.5, 0) to (8, 0);
        \draw[->, thick] (0, -0.5) to (0, 3);

        \drawlatticepath{1, 1, -1, 1, -1, -1, 1, -1}
        \draw[->, ultra thick, red] (1, 1) to (2, 2);

        \node at (6, -0.35) {$\bullet$}; 
    \end{tikzpicture}
    \begin{tikzpicture}[scale = 0.4]
        \draw[help lines] (0, 0) grid (8, 3);
        \draw[->, thick] (-0.5, 0) to (8, 0);
        \draw[->, thick] (0, -0.5) to (0, 3);

        \drawlatticepath{1, 1, -1, 1, -1, -1, 1, -1}
        \draw[->, ultra thick, red] (2, 2) to (3, 1);

        \node at (6, -0.35) {$\bullet$}; 
    \end{tikzpicture}
    \begin{tikzpicture}[scale = 0.4]
        \draw[help lines] (0, 0) grid (8, 3);
        \draw[->, thick] (-0.5, 0) to (8, 0);
        \draw[->, thick] (0, -0.5) to (0, 3);

        \drawlatticepath{1, 1, -1, 1, -1, -1, 1, -1}
        \draw[->, ultra thick, red] (3, 1) to (4, 2);

        \node at (6, -0.3) {$\bullet$}; 
    \end{tikzpicture}
    \begin{tikzpicture}[scale = 0.4]
        \draw[help lines] (0, 0) grid (8, 3);
        \draw[->, thick] (-0.5, 0) to (8, 0);
        \draw[->, thick] (0, -0.5) to (0, 3);

        \drawlatticepath{1, 1, -1, 1, -1, -1, 1, -1}
        \draw[->, ultra thick, red] (4, 2) to (5, 1);

        \node at (6, -0.35) {$\bullet$}; 
    \end{tikzpicture}
    \begin{tikzpicture}[scale = 0.4]
        \draw[help lines] (0, 0) grid (8, 3);
        \draw[->, thick] (-0.5, 0) to (8, 0);
        \draw[->, thick] (0, -0.5) to (0, 3);

        \drawlatticepath{1, 1, -1, 1, -1, -1, 1, -1}
        \draw[->, ultra thick, red] (5, 1) to (6, 0);

        \node at (6, -0.35) {$\bullet$}; 
    \end{tikzpicture}
    \caption{The sum of indices of the down-steps in the valleys of the Dyck path given by step-sequence $1, 1, -1, 1, -1, -1, 1, -1$ is $9$, 
    and the $9$ double-marked Dyck paths which represent this are drawn, with valley-markings (circle) and unique step-markings (red, thick) depicted.}
    \label{fig:markings}
\end{figure}
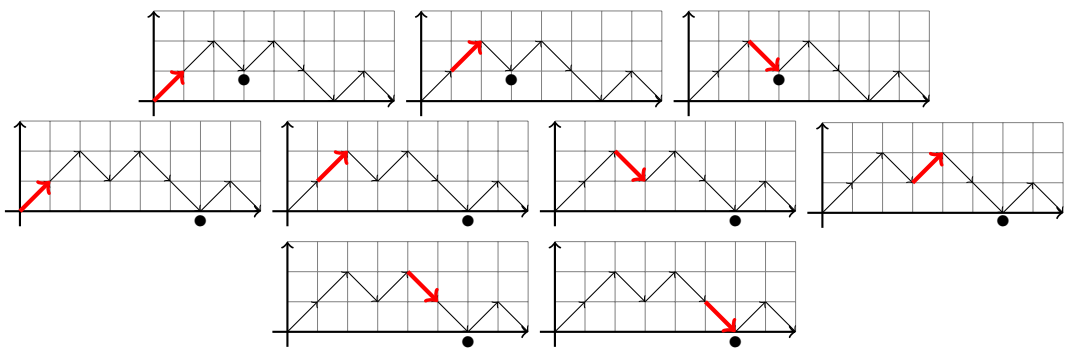
In this section we will provide a bijection between $2_0$-Dyck paths with $(2, n-2)$ and these double-marked Dyck paths of length $2n-2$.

Consider an arbitrary $2_0$-Dyck path of length $3n$ with $(2, n-2)$. Since the number of down-steps at a height of $1$ modulo $2$ is fixed 
at $2$, there are two possibilities for the relative positioning of these two down-steps: (A) they are separated by at least two up-steps, 
and (B) they are adjacent in the path. To avoid repetition, we refer to the two down-steps at a height of 1 modulo 2 as ``odd down-steps''. 
The steps of the mapping to double-marked Dyck paths are given as follows, where examples of the different cases can be seen in  
Figures~\ref{fig:bij-mapping-B},~\ref{fig:bij-mapping-A1},~and~\ref{fig:bij-mapping-A2}.
The reasons for each step being permissible is given in italics: 
\begin{enumerate}
    \item Mark the two odd down-steps. If case A holds, shift the rightmost odd down-step in the path left one 
    up-step. If case B holds, shift both odd down-steps to the left one up-step. 

    \noindent \textit{At this point we have a $2_0$-Dyck path of length $3n$ with either one odd down-step remaining (case A), 
    or no odd down-steps remaining (case B). The left shift is possible in both cases, as the odd down-steps must start at a height of 
    $1$ or above, and so a left shift of one up-step will stay weakly above the $x$-axis, and there will always be at least one 
    up-step between odd down-steps and the other down-steps so a shift by one will not change the order of occurrence of down-steps. 
    } 
    \item Mark the endpoint of the rightmost (previously) odd down-step with a circle, and remove its previous marking. This new marking 
    will be the \emph{valley-marking}. 
    
    \noindent \textit{Since the rightmost (previously) odd down-step is always followed by an up-step, the shift to the left one up-step
    has created the step-sequence down-up-up, thus introducing a valley.}
    \item If case A held (e.g.\ Figure~\ref{fig:bij-mapping-B}), then remove the leftmost (unshifted) odd down-step as well as the up-steps immediately to the left and right 
    of it, marking the step to the left of the removed steps. This is the \emph{step-marking}. 

    \noindent \textit{The step-sequence removed is up-down-up, and thus no height shift takes place and all conditions of a valid 
    $2_0$-Dyck path are still met. The marking stores where this step was removed, and removing this step removes the last odd 
    down-step left in the path.}
    \item If case B held, we consider a return to the $x$-axis to be any point in the path where the endpoint of a step is on the 
    $x$-axis, and there are two possibilities:
        \begin{enumerate}
            \item The section of the path to the left of the leftmost odd down-step contains no returns (e.g.\ Figure~\ref{fig:bij-mapping-A1}). 
            In this case, remove the first two up-steps in the path and the leftmost (previously) odd down-step, and mark the (previously) 
            odd down-step in the valley with a step-marking.
            
            \noindent \textit{Such a section of path which contains no returns and has all down-steps at a height of 0 modulo 2 
            must begin with a sequence of at least four consecutive up-steps -- an odd number would not be possible because of the 
            condition on the down-steps, and a maximal sequence of two consecutive up-steps would result in a return. Therefore removing two 
            up-steps at the start of the path and a down-step before another return of the path would not cause the path to go below the $x$-axis.  
            Since the down-step in the valley cannot be preceded by an up-down-up as in the case above, this provides a way to mark this step.}
            \item The section of the path to the left of the leftmost (previously) odd down-step contains at least one return (e.g.\ Figure~\ref{fig:bij-mapping-A2}). In this case, give the 
            down-step immediately before the rightmost return to the left of the 
            leftmost (previously) odd down-step the step-marking, and remove the two up-steps to the right of this return. Additionally, 
            remove the (previously) leftmost odd down-step.

            \noindent \textit{In this case the first return to the left must be a down-step followed by 
            at least four up-steps, where we can make the required removals as before.}
        \end{enumerate}
    \item What we now have is a $2_0$-Dyck path of length $3n-3$ where all down-steps are at a height of $0$ modulo $2$. From left to right in 
    the path we reduce every two up-steps to one up-step $(1, 1)$, and every down-step $(1, -2)$ to a down-step $(1, -1)$, step-marking the 
    resulting step if one of the steps to be reduced has the step-marking. We also preserve the relative position of the valley-marking.
\end{enumerate}

\begin{figure}[ht]

    \begin{tikzpicture}[scale = 0.3]
        \draw [help lines] (0, 0) grid (15, 6);
        \draw [->, thick] (-0.5, 0) to (15.5, 0);
        \draw [->, thick] (0, -0.5) to (0, 6.5);
        
        \drawlatticepath{1, 1, 1, 1, 1, 1, -2, -2, 1, -2, 1, 1, -2, 1, -2}
    \end{tikzpicture}\raisebox{2em}{$\xrightarrow{\text{(1)}}$}
    \begin{tikzpicture}[scale = 0.3]
        \draw [help lines] (0, 0) grid (15, 6);
        \draw [->, thick] (-0.5, 0) to (15.5, 0);
        \draw [->, thick] (0, -0.5) to (0, 6.5);
        
        \drawlatticepath{1, 1, 1, 1, 1, 1, -2, -2, 1, -2, 1, -2, 1, 1, -2}

        \draw [->, ultra thick] (9, 3) to (10, 1);
        \draw [->, ultra thick] (11, 2) to (12, 0);   
    \end{tikzpicture}
    
    \raisebox{2em}{$\xrightarrow{\text{(2)}}$}
    \begin{tikzpicture}[scale = 0.3]
        \draw [help lines] (0, 0) grid (15, 6);
        \draw [->, thick] (-0.5, 0) to (15.5, 0);
        \draw [->, thick] (0, -0.5) to (0, 6.5);
        
        \drawlatticepath{1, 1, 1, 1, 1, 1, -2, -2, 1, -2, 1, -2, 1, 1, -2}

        \draw [->, ultra thick] (9, 3) to (10, 1);
        \node at (12, -0.5) {$\bullet$};
    \end{tikzpicture}\raisebox{2em}{$\xrightarrow{\text{(3)}}$}
    \begin{tikzpicture}[scale = 0.3]
        \draw [help lines] (0, 0) grid (12, 6);
        \draw [->, thick] (-0.5, 0) to (12.5, 0);
        \draw [->, thick] (0, -0.5) to (0, 6.5);
        
        \drawlatticepath{1, 1, 1, 1, 1, 1, -2, -2, -2, 1, 1, -2}

        \draw [->, red, ultra thick] (7, 4) to (8, 2);
        \node at (9, -0.5) {$\bullet$};
    \end{tikzpicture}
    \raisebox{2em}{$\xrightarrow{\text{(5)}}$}
    \begin{tikzpicture}[scale = 0.3]
        \draw [help lines] (0, 0) grid (8, 3);
        \draw [->, thick] (-0.5, 0) to (8.5, 0);
        \draw [->, thick] (0, -0.5) to (0, 3.5);
        
        \drawlatticepath{1, 1, 1, -1, -1, -1, 1, -1}

        \draw [->, red, ultra thick] (4, 2) to (5, 1);
        \node at (6, -0.5) {$\bullet$};
    \end{tikzpicture}

    \caption{An example of the mapping in Case A: where the odd down-steps are separated by at least two up-steps in the path.}
    \label{fig:bij-mapping-B}
\end{figure}
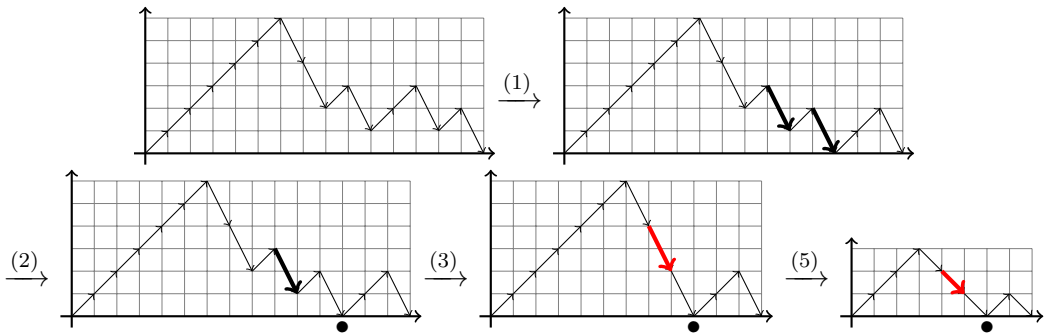
\begin{figure}[ht]

    \begin{tikzpicture}[scale = 0.3]
        \draw [help lines] (0, 0) grid (15, 6);
        \draw [->, thick] (-0.5, 0) to (15.5, 0);
        \draw [->, thick] (0, -0.5) to (0, 6.5);
        
        \drawlatticepath{1, 1, 1, 1, 1, 1, -2, 1, -2, -2, 1, -2, 1, 1, -2}
    \end{tikzpicture}\raisebox{2em}{$\xrightarrow{\text{(1)}}$}
    \begin{tikzpicture}[scale = 0.3]
        \draw [help lines] (0, 0) grid (15, 6);
        \draw [->, thick] (-0.5, 0) to (15.5, 0);
        \draw [->, thick] (0, -0.5) to (0, 6.5);
        
        \drawlatticepath{1, 1, 1, 1, 1, 1, -2, -2, -2, 1, 1, -2, 1, 1, -2}

        \draw [->, ultra thick] (7, 4) to (8, 2);
        \draw [->, ultra thick] (8, 2) to (9, 0);   
    \end{tikzpicture}
    
    \raisebox{2em}{$\xrightarrow{\text{(2)}}$}
    \begin{tikzpicture}[scale = 0.3]
        \draw [help lines] (0, 0) grid (15, 6);
        \draw [->, thick] (-0.5, 0) to (15.5, 0);
        \draw [->, thick] (0, -0.5) to (0, 6.5);
        
        \drawlatticepath{1, 1, 1, 1, 1, 1, -2, -2, -2, 1, 1, -2, 1, 1, -2}

        \draw [->, ultra thick] (7, 4) to (8, 2);
        \node at (9, -0.5) {$\bullet$};
    \end{tikzpicture}\raisebox{2em}{$\xrightarrow{\text{(4a)}}$}
    \begin{tikzpicture}[scale = 0.3]
        \draw [help lines] (0, 0) grid (12, 6);
        \draw [->, thick] (-0.5, 0) to (12.5, 0);
        \draw [->, thick] (0, -0.5) to (0, 6.5);
        
        \drawlatticepath{1, 1, 1, 1, -2, -2, 1, 1, -2, 1, 1, -2}

        \draw [->, red, ultra thick] (5, 2) to (6, 0);
        \node at (6, -0.5) {$\bullet$};
    \end{tikzpicture}
    \raisebox{2em}{$\xrightarrow{\text{(5)}}$}
    \begin{tikzpicture}[scale = 0.3]
        \draw [help lines] (0, 0) grid (8, 3);
        \draw [->, thick] (-0.5, 0) to (8.5, 0);
        \draw [->, thick] (0, -0.5) to (0, 3.5);
        
        \drawlatticepath{1, 1, -1, -1, 1, -1, 1, -1}

        \draw [->, red, ultra thick] (3, 1) to (4, 0);
        \node at (4, -0.5) {$\bullet$};
    \end{tikzpicture}

    \caption{An example of the mapping in Case B: where the odd down-steps are adjacent 
    and there are no returns to the left of the leftmost odd down-step in the path.}
    \label{fig:bij-mapping-A1}
\end{figure}
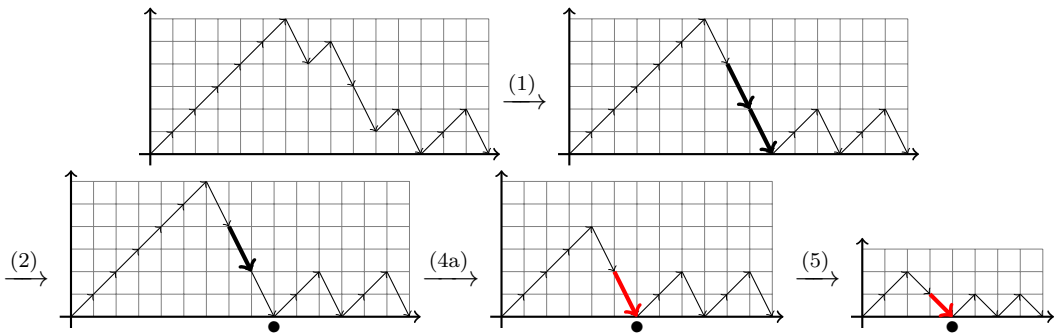
\begin{figure}[ht]

    \begin{tikzpicture}[scale = 0.3]
        \draw [help lines] (0, 0) grid (15, 6);
        \draw [->, thick] (-0.5, 0) to (15.5, 0);
        \draw [->, thick] (0, -0.5) to (0, 6.5);
        
        \drawlatticepath{1, 1, -2, 1, 1, -2, 1, 1, 1, 1, 1, -2, -2, 1, -2}
    \end{tikzpicture}\raisebox{2em}{$\xrightarrow{\text{(1)}}$}
    \begin{tikzpicture}[scale = 0.3]
        \draw [help lines] (0, 0) grid (15, 6);
        \draw [->, thick] (-0.5, 0) to (15.5, 0);
        \draw [->, thick] (0, -0.5) to (0, 6.5);
        
        \drawlatticepath{1, 1, -2, 1, 1, -2, 1, 1, 1, 1, -2, -2, 1, 1, -2}

        \draw [->, ultra thick] (10, 4) to (11, 2);
        \draw [->, ultra thick] (11, 2) to (12, 0);   
    \end{tikzpicture}
    
    \raisebox{2em}{$\xrightarrow{\text{(2)}}$}
    \begin{tikzpicture}[scale = 0.3]
        \draw [help lines] (0, 0) grid (15, 6);
        \draw [->, thick] (-0.5, 0) to (15.5, 0);
        \draw [->, thick] (0, -0.5) to (0, 6.5);
        
        \drawlatticepath{1, 1, -2, 1, 1, -2, 1, 1, 1, 1, -2, -2, 1, 1, -2}

        \draw [->, ultra thick] (10, 4) to (11, 2);
        \node at (12, -0.5) {$\bullet$};
        \node at (3, -0.5) {$\circ$};
        \node at (6, -0.5) {$\circ$};
        \node at (0, -0.5) {$\circ$};
    \end{tikzpicture}\raisebox{2em}{$\xrightarrow{\text{(4b)}}$}
    \begin{tikzpicture}[scale = 0.3]
        \draw [help lines] (0, 0) grid (12, 6);
        \draw [->, thick] (-0.5, 0) to (12.5, 0);
        \draw [->, thick] (0, -0.5) to (0, 6.5);
        
        \drawlatticepath{1, 1, -2, 1, 1, -2, 1, 1, -2, 1, 1, -2}

        \draw [->, red, ultra thick] (5, 2) to (6, 0);
        \node at (9, -0.5) {$\bullet$};
        
    \end{tikzpicture}
    \raisebox{2em}{$\xrightarrow{\text{(5)}}$}
    \begin{tikzpicture}[scale = 0.3]
        \draw [help lines] (0, 0) grid (8, 3);
        \draw [->, thick] (-0.5, 0) to (8.5, 0);
        \draw [->, thick] (0, -0.5) to (0, 3.5);
        
        \drawlatticepath{1, -1, 1, -1, 1, -1, 1, -1}

        \draw [->, red, ultra thick] (3, 1) to (4, 0);
        \node at (6, -0.5) {$\bullet$};
    \end{tikzpicture}

    \caption{An example of the mapping in Case B: where the odd down-steps are adjacent 
    and there is at least one return to the left of the leftmost odd down-step in the path. 
    Returns to the left of the leftmost odd down-step are marked with an empty circle.}
    \label{fig:bij-mapping-A2}
\end{figure}
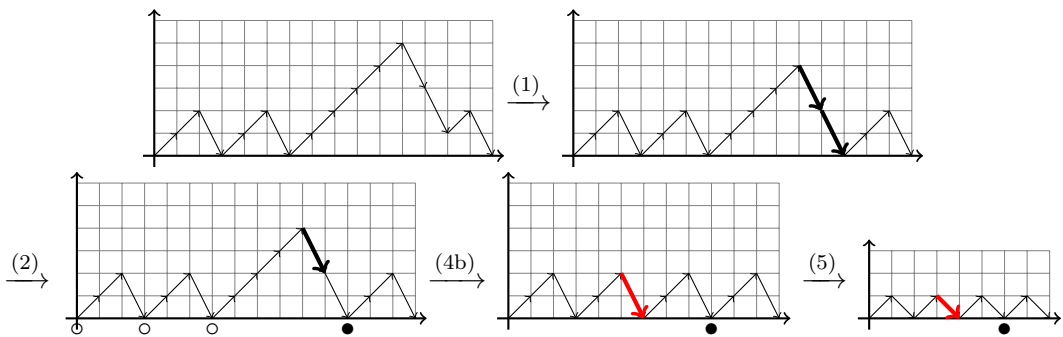

From the description of the mapping, it is clear that a $2_0$-Dyck path with $(2, n-2)$ is mapped to a double-marked Dyck path. 
We now prove bijectivity.

It is clear that each of the three possible cases (described in bullets 3, 4a, 4b) for the mapping is independently 
injective, what remains to show is that no pair of cases yields the same double-marked Dyck path. 
\begin{itemize}
    \item \textbf{Case 3 and 4a:} In Case 3 the step to the left of a step-sequence of up-down-up becomes marked, 
    where the down-step in that sequence is the leftmost odd down-step. However, in Case 4a the rightmost (previously) 
    odd down-step, which is in the marked valley, becomes marked. 
    It is therefore not possible to mark the same step under both mappings.
    \item \textbf{Case 3 and 4b:} The characteristic property of the resulting path from Case 4b is that the marked step 
    is a down-step which ends on the $x$-axis. It is not possible for a path satisfying Case 3 to result in such 
    a path, as in order for this to happen, the step-sequence up-down-up must occur immediately to the right of a return to the 
    $x$-axis, which would violate the condition that a $2_0$-Dyck path stays weakly above the $x$-axis. 
    \item \textbf{Case 4a and 4b:} In Case 4a the down-step in the marked valley is marked, and by definition of Case 4b, the 
    marked down-step occurs strictly to the left of the down-step associated with the valley. 
\end{itemize}
Therefore we have established injectivity of our function. Now, to establish surjectivity, we obtain the reverse mapping as 
(briefly) follows:
\begin{enumerate}
    \item Exchange every up-step $(1, 1)$ for two up-steps $(1, 1)$, marking the second of the two if the original step was 
    marked, and exchange down-steps $(1, -1)$ for down-steps $(1, -2)$, marking the down-step if it was originally marked. 
    \item Consider the marked step which results from (1):
    \begin{itemize}
        \item If it is in the marked valley, reverse the procedure described in (4a) -- add two up-steps to the start
        of the path and a down-step immediately to the left of the marked step. 
        \item Else if it is a down-step which has an endpoint on the $x$-axis, reverse the procedure described in (4b) -- add 
        two up-steps immediately to the right of the marked step, and one down-step immediately to the left of the down-step 
        in the valley. 
        \item Else insert a step-sequence of up-down-up immediately to the right of the marked step.  
    \end{itemize}
    Mark the newly inserted down-step.
    \item Remove the valley-marking and mark the down-step in the previously marked valley. Shift the rightmost marked 
    down-step and any adjacent marked down-steps one up-step to the right. Remove the markings. What results is a $2_0$-Dyck path 
    that satisfies our conditions.  
\end{enumerate}

\bibliographystyle{plain}
\bibliography{bib/cheub}

\providecommand{\Submitted}{Submitted} \providecommand{\availableat}{ available
  at } \providecommand{\alsoavailableat}{ also available at }
  \providecommand{\evavailableat}{earlier version available at }
  \providecommand{\toappearin}{To appear in } \providecommand{\toappear}{to
  appear} \providecommand{\inpreparation}{in preparation}
  \providecommand{\doi}[1]{\href{http://dx.doi.org/#1}{\path{doi:#1}}}
  \providecommand{\lowercaseforams}{}
  \providecommand{\etc}{\emph{etc.}}\def\cprime{$'$}
\begin{thebibliography}{10}

\bibitem{Aigner:2007:course-enumeration}
Martin Aigner.
\newblock {\em A course in enumeration}, volume 238 of {\em Graduate Texts in
  Mathematics}.
\newblock Springer, Berlin, 2007.

\bibitem{Asinowski-Hackl-Selkirk:2020:downstep-statistics}
Andrei Asinowski, Benjamin Hackl, and Sarah~J. Selkirk.
\newblock Down-step statistics in generalized {D}yck paths.
\newblock {\em Discrete Mathematics \& Theoretical Computer Science}, 2022.
\newblock To appear.

\bibitem{Banderier-Wallner:rational-slope:2019}
Cyril Banderier and Michael Wallner.
\newblock {\em The Kernel Method for Lattice Paths Below a Line of Rational
  Slope}, pages 119--154.
\newblock Springer International Publishing, 2019.

\bibitem{Beagley-Drube:2015:Raney}
Jonathan~E. Beagley and Paul Drube.
\newblock The {R}aney generalization of {C}atalan numbers and the enumeration
  of planar embeddings.
\newblock {\em Australasian Journal of Combinatorics}, 63.1:130--141, 2015.

\bibitem{Burstein:2020:peaks-mod-k}
Alexander Burstein.
\newblock Distribution of peak heights modulo $k$ and double descents on
  $k$-{D}yck paths.
\newblock arXiv:2009.00760 [math.CO], 2020.

\bibitem{Cameron:2016:Returns}
Naiomi~T. Cameron and Jillian~E. McLeod.
\newblock Returns and hills on generalized {D}yck paths.
\newblock {\em J. Integer Seq.}, 19(2):3, 2016.

\bibitem{Duchon:2000:generalized-Dyck}
Philippe Duchon.
\newblock On the enumeration and generation of generalized {D}yck words.
\newblock {\em Discrete Mathematics}, 225(1):121--135, 2000.
\newblock FPSAC'98.

\bibitem{Flajolet-Sedgewick:ta:analy}
Philippe Flajolet and Robert Sedgewick.
\newblock {\em Analytic combinatorics}.
\newblock Cambridge University Press, Cambridge, 2009.

\bibitem{Gessel:2016:lagrange}
Ira~M. Gessel.
\newblock Lagrange inversion.
\newblock {\em J. Combin. Theory Ser. A}, 144:212--249, 2016.

\bibitem{Gu-Prodinger-Wagner:2010:k-plane-trees}
Nancy S.~S. Gu, Helmut Prodinger, and Stephan Wagner.
\newblock Bijections for a class of labeled plane trees.
\newblock {\em European J. Combin.}, 31(3):720--732, 2010.

\bibitem{Heubach-Li-Mansour:2008:k-Catalan}
Silvia Heubach, Nelson~Y. Li, and Toufik Mansour.
\newblock Staircase tilings and $k$-{C}atalan structures.
\newblock {\em Discrete Mathematics}, 308:5954--5964, 2008.

\bibitem{Hilton-Pedersen:1991:catalan-generalization}
Peter Hilton and Jean Pedersen.
\newblock Catalan numbers, their generalization, and their uses.
\newblock {\em Math. Intelligencer}, 13(2):64--75, 1991.

\bibitem{Josuat-Kim:2016:Dyck-tilings}
Matthieu Josuat-Verg\`es and Jang~Soo Kim.
\newblock Generalized {D}yck tilings.
\newblock {\em European J. Combin.}, 51:458--474, 2016.

\bibitem{muhle-kalli:2014:strip-decomposition-dyck-path}
Henri M{\"u}hle and Myrto Kallipoliti.
\newblock The $m$-cover posets and the strip-decomposition of $m$-{D}yck paths.
\newblock {\em Discrete Mathematics \& Theoretical Computer Science}, {DMTCS
  Proceedings vol. AT, 26th International Conference on Formal Power Series and
  Algebraic Combinatorics (FPSAC 2014)}:393--404, 2014.

\bibitem{OEIS:2022}
The {O}n-{L}ine {E}ncyclopedia of {I}nteger {S}equences.
\newblock \url{http://oeis.org}, 2022.

\bibitem{Okoth-Wagner:tbc:k-plane}
Isaac Okoth and Stephan Wagner.
\newblock Refined enumeration of $k$-plane trees and $k$-noncrossing trees.
\newblock Preprint, 2022.

\bibitem{Rogers:schroeder-triangle:1977}
D.~G. Rogers.
\newblock A {S}chr\"{o}der triangle: three combinatorial problems.
\newblock In {\em Combinatorial mathematics, {V} ({P}roc. {F}ifth {A}ustral.
  {C}onf., {R}oy. {M}elbourne {I}nst. {T}ech., {M}elbourne, 1976)}, pages
  175--196. Lecture Notes in Math., Vol. 622. 1977.

\bibitem{Rusu:2021:Raney-numbers}
Irena Rusu.
\newblock Raney numbers, threshold sequences and {M}otzkin-like paths.
\newblock arXiv:2109.05291 [math.CO], 2021.

\bibitem{Selkirk:2019:MSc}
Sarah~J. Selkirk.
\newblock On a generalisation of $k$-{D}yck paths.
\newblock Master's thesis, Stellenbosch University, 2019.

\bibitem{Xin-Zhang:2019:sweep-maps}
Guoce Xin and Yingrui Zhang.
\newblock On the sweep map for {$\vec k$}-{D}yck paths.
\newblock {\em Electron. J. Combin.}, 26(3):Paper 3.63, 24, 2019.

\end{thebibliography}

\end{document}